\documentclass{amsart}
\usepackage[utf8]{inputenc}

\usepackage{tikz}
\usetikzlibrary{arrows.meta}
\usepackage{commutative-diagrams}
\usetikzlibrary{cd}
\usepackage{geometry}
\usepackage[colorlinks=true]{hyperref}
\usepackage[all]{hypcap}
\usepackage{mathtools}
\usepackage{booktabs}
\usepackage{tabularx}
\usepackage{array}
\usepackage{amssymb}
\usepackage{dsfont}
\usepackage{witharrows}
\usepackage{threeparttable}

\newtheorem{theorem}{Theorem}[section]
\newtheorem{lemma}[theorem]{Lemma}
\newtheorem{corollary}[theorem]{Corollary}
\newtheorem{proposition}[theorem]{Proposition}

\theoremstyle{definition}
\newtheorem{definition}[theorem]{Definition}

\theoremstyle{remark}
\newtheorem{remark}[theorem]{Remark}

\newcommand{\qbin}[2]{\genfrac{[}{]}{0pt}{}{{#1}}{{#2}}}
\newcommand{\qnum}[1]{[{#1}]}
\newcommand{\pochq}[2]{({#1};{q})_{{#2}}}

\newcommand{\hyp}[3]{\,\mbox{}_{3}\phi_{2}
	\left(\!\!\left.\begin{array}{c}#1\\#2\end{array}\right|#3\right)}

\numberwithin{equation}{section}

\newcolumntype{C}{>{\centering\arraybackslash}X}

\begin{document}

\title[]{On the dimension of the space generated by characteristic vectors of $q$-Steiner systems}


\author[Q. Li]{Qilong Li\textsuperscript{\,1}}
\address{\textsuperscript{\,1}College of Science, National University of Defense Technology, 410073 Changsha, China}
\email[Q. Li]{li.qilong@outlook.com}
\author[C. Wei{\ss}]{Charlene Wei{\ss}\textsuperscript{\,2}}
\email[C. Wei\ss]{chweiss@math.upb.de}
\author[Y. Zhou]{Yue Zhou\textsuperscript{\,1}}
\address{\textsuperscript{\,1}College of Science, National University of Defense Technology, 410073 Changsha, China}
\email[Y. Zhou]{yue.zhou.ovgu@gmail.com}



\begin{abstract}
	Fix a prime power $q$ and parameters $1\leq t\leq k\leq n$, the corresponding Steiner system in the Grassmann scheme, or the $q$-Steiner system, is a collection $\mathfrak{B}$ of $k$-dimensional subspaces of $\mathbb{F}_{q}^n$ such that for each $t$-dimensional subspace $T$, there exists exactly one element of $\mathfrak{B}$ containing $T$. The dimension of Steiner systems in the Grassmann scheme is defined to be the dimension of the $\mathbb{Q}$-vector space spanned by the characteristic vectors of all these $q$-Steiner systems. In this paper, we prove that when a quadruple $(t,k,n,q)$ admits at least one $q$-Steiner system, the corresponding dimension is equal to ${n\brack k}_{q}-{n\brack t}_{q}+1$. This generalizes the 2019 work of Ghodrati \cite{ghodrati2019dimension} on ordinary Steiner systems.
\end{abstract}
\keywords{$q$-Steiner systems, Grassmann scheme, eigenvalues, generalized Eberlein polynomials}

\maketitle

\section{Introduction}
Let $q>1$ be a prime power and let $\mathbb{F}_{q}$ be the finite field of order $q$. The collection of all $k$-dimensional subspaces of $\mathbb{F}_{q}^n$ is called the \emph{Grassmannian} over $\mathbb{F}_{q}$ and is denoted by $\text{Gr}_{n,k}(\mathbb{F}_{q})$. For any integer $n$ and nonnegative integer $k$,
\begin{align*}
	{n\brack k}_{q}:=\frac{(q^n-1)(q^{n-1}-1)\cdots(q^{n-k+1}-1)}{(q^k-1)(q^{k-1}-1)\cdots(q-1)}
\end{align*}
is called the \textit{Gaussian binomial coefficient} (or $q$-\textit{binomial coefficient}). It is well known that for all $0\leq k\leq n$ we have
\begin{align*}
	\#\text{Gr}_{n,k}(\mathbb{F}_{q})={n\brack k}_{q},
\end{align*}
where we define ${n\brack 0}_{q}=1$, while ${n\brack k}_{q}=0$ for all $k<0$. Let ${n\brack 1}_{q}$ be denoted by the \textit{$q$-analog integer} $[n]_{q}$.\par 
By a \textit{subspace design} with parameters $t$-$(n,k,\lambda)_{q}$ ($1\leq t\leq k\leq n$) we mean a subset of the Grassmannian $\text{Gr}_{n,k}(\mathbb{F}_{q})$ (the elements of this subset are called \emph{blocks}) such that every $t$-dimensional subspace $T$ of $\mathbb{F}_{q}^n$ is contained in exactly $\lambda$ blocks. A $t$-$(n,k,\lambda)_{q}$ design is said to be \textit{non-trivial} if $t<k$ and not every $k$-dimensional subspace serves as a block.\par 
The subspace designs were first introduced by Cameron \cite{cameron1974generalisation,cameron1974locally} and Delsarte \cite{delsarte1976association} as vector space analogs of combinatorial designs, which can be viewed as the limiting case $q\to 1$ of subspace designs \cite{tits1957analogues}. Indeed, subspace designs have many properties analogous to those of combinatorial ones. For example, a combinatorial $t$-$(n,k,\lambda)$ design actually serves as an $i$-$(n,k,\lambda_{i})$ design for any $i\in\left\{ 0,1,...,t\right\}$, with parameter
\begin{align*}
	\lambda_{i}=\dfrac{{n-i\choose t-i}}{{k-i\choose t-i}}\lambda.
\end{align*}
Similarly, the \textit{divisibility condition} holds for subspace designs in the sense of the following theorem, from which we see that $\lambda{n-i\brack t-i}_{q}$ must be divided by ${k-i\brack t-i}_{q}$ for all $0\leq i\leq t$ for a $t$-$(n,k,\lambda)_{q}$ subspace design to exist.
\begin{theorem}[{\cite[Lemma 4.1(1)]{suzuki1990inequalities}}]
	Let $t,n,k$ and $\lambda$ be positive integers satisfying $t\leq k\leq n$. Let $\mathfrak{D}$ be a $t$-$(n,k,\lambda)_{q}$ subspace design. Then for each $i\in\left\{0,1,...,t\right\}$, $\mathfrak{D}$ is an $i$-$(n,k,\lambda_{i})_{q}$ design with
	\begin{align*}
		\lambda_{i}=\lambda{n-i\brack t-i}_{q}\big/{k-i\brack t-i}_{q}.
	\end{align*}
\end{theorem}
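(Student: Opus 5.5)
The proof is a double count. Fix $i\in\{0,1,\dots,t\}$ and an $i$-dimensional subspace $I$ of $\mathbb{F}_q^n$. I would count the pairs $(T,B)$ with $T$ a $t$-dimensional subspace, $B\in\mathfrak{D}$ a block, and $I\subseteq T\subseteq B$. The goal is to show that the number of blocks containing $I$ does not depend on the choice of $I$, and to compute it.

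First count by choosing $T$. The $t$-subspaces $T$ containing $I$ correspond bijectively to the $(t-i)$-subspaces of the quotient $\mathbb{F}_q^n/I\cong\mathbb{F}_q^{n-i}$, so there are ${n-i\brack t-i}_q$ of them. By the definition of a $t$-$(n,k,\lambda)_q$ design, each such $T$ lies in exactly $\lambda$ blocks. The number of pairs is therefore $\lambda{n-i\brack t-i}_q$.

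Now count by choosing $B$. Only blocks with $I\subseteq B$ contribute, so let $\lambda_I$ be the number of such blocks. For a fixed block $B\supseteq I$, the $t$-subspaces $T$ with $I\subseteq T\subseteq B$ correspond to the $(t-i)$-subspaces of $B/I\cong\mathbb{F}_q^{k-i}$, so there are ${k-i\brack t-i}_q$ of them. This gives $\lambda_I{k-i\brack t-i}_q$ pairs.

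Setting the two counts equal gives $\lambda_I{k-i\brack t-i}_q=\lambda{n-i\brack t-i}_q$. Since $t\le k$, we have ${k-i\brack t-i}_q\ge 1$, so it is nonzero and we can divide by it. Hence $\lambda_I=\lambda{n-i\brack t-i}_q\big/{k-i\brack t-i}_q$, which does not depend on $I$. So $\mathfrak{D}$ is an $i$-$(n,k,\lambda_i)_q$ design with this $\lambda_i$.

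The only point that needs real justification is the correspondence between intermediate subspaces and subspaces of a quotient space, namely $\{T: I\subseteq T\subseteq W\}\leftrightarrow \mathrm{Gr}_{\dim W-i,\,t-i}(W/I)$ via $T\mapsto T/I$. This is the lattice isomorphism theorem for vector spaces. The case $i=0$ needs no separate treatment: there $I=\{0\}$, and $\lambda_0$ is simply the total number of blocks.
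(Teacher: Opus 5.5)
Your double count of the pairs $(T,B)$ with $I\subseteq T\subseteq B$ is correct and complete. It gives $\lambda_I\qbin{k-i}{t-i}=\lambda\qbin{n-i}{t-i}$, and since $\qbin{k-i}{t-i}\neq 0$ for $0\le t-i\le k-i$, this forces $\lambda_I$ to be independent of $I$. The paper gives no proof of its own here and simply cites Suzuki's Lemma 4.1(1); your argument is the standard proof of that lemma, and with $\lambda=1$ it supplies the values $\lambda_{d+i}$ that the paper later uses when counting blocks $Y$ with $X\cap Y=I$.
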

Despite of the strong analogies and historical parallels between combinatorial and subspace designs, the situation seems to differ in essence when it comes to the latter. The existence problem of combinatorial designs has been resolved in 1987 by Teirlinck  \cite{teirlinck1987non}, who states that non-trivial combinatorial $t$-$(n,k,\lambda)$ designs exist for any $t$ and certain $\lambda$. Nevertheless, the longstanding existence problem of subspace designs had been open until resolved by Fazeli et al.\ \cite{fazeli2014nontrivial} in 2014, who used probabilistic methods rather than explicit constructions.\par 
The $q$-Steiner system defined as follows is a special case of subspace design.
\begin{definition}
	Let $t,k$ and $n$ be integers satisfying $1\leq t<k\leq n$ and let $q$ be a prime power. Then a \textit{$q$-Steiner system} $\mathfrak{B}_{q}(t,k,n)$ refers to a $t$-$(n,k,\lambda)_{q}$ subspace design with $\lambda=1$. The collection of all such $\mathfrak{B}_{q}(t,k,n)$ with fixed $t,k,n,q$ is denoted by $S_{q}(t,k,n)$. When there is no ambiguity, we write $\mathfrak{B}$ instead of $\mathfrak{B}_{q}(t,k,n)$ for brevity.
\end{definition}\par 
Note that whenever a non-trivial Steiner system with parameters $t,k,n$ exists, it is proved by Schwartz and Etzion \cite[Theorem 3]{schwartz2002codes} that
\begin{align*}
	n\geq 2k,
\end{align*}
which will be kept as an assumption in the sequel.\par 
Although the notion of the $q$-Steiner system has been studied extensively, non-trivial examples of $q$-Steiner systems have been constructed only in sporadic cases. Metsch \cite{metsch1999bose} once conjectured that $q$-Steiner system does not exist for $t\geq 2$, while this has been disproved by Braun et al.\ \cite{braun2016existence}, who explicitly constructed over 500 nonisomorphic $q$-Steiner systems in $S_{q}(2,3,13)$. Until very recently, the existence problem of $q$-Steiner system, which had lasted for five decades, was settled by Keevash et al.\ by proving the following result.
\begin{theorem}[{\cite[Theorem 1.2]{keevash2025existence}}]\label{thm_existence.of.q-Steiner.system}
	Fix $q,k$ and $t$. For $n\geq n_{1}(q,k)$ such that ${k-i\brack t-i}_{q}\mid{n-i\brack t-i}_{q}$ for all $0\leq i\leq t-1$, there is a $q$-Steiner system $\mathfrak{B}_{q}(t,k,n)$.
\end{theorem}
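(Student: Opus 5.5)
This statement is imported from \cite{keevash2025existence} rather than proved here, and a complete proof is far beyond a single argument. The plan I would follow is the one that settled the set-theoretic case: \emph{iterative absorption} adapted to the Grassmann setting. We view $\text{Gr}_{n,t}(\mathbb{F}_{q})$ as the vertex set and each $k$-space $K$ as the ``clique'' covering the ${k\brack t}_{q}$ $t$-subspaces of $K$. A $q$-Steiner system is then a perfect decomposition of the complete $t$-uniform ``$q$-hypergraph'' into such cliques. The hypothesis ${k-i\brack t-i}_{q}\mid{n-i\brack t-i}_{q}$ for $0\le i\le t-1$ is exactly the local divisibility needed, in view of the divisibility condition of the first theorem above (Suzuki's lemma).

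The key steps, in order, are as follows.

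\textbf{Step 1: absorber.} Before doing anything else, set aside a sparse, well-spread \emph{absorber} $A$, built from pseudorandomly chosen local gadgets. It must have the property that for every sufficiently small ``divisible'' leftover $L$ of $t$-subspaces, supported in a prescribed small subspace, the family $A\cup L$ has an exact decomposition into $k$-spaces.

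\textbf{Step 2: vortex.} Fix a chain of subspaces $\mathbb{F}_{q}^n=V_0\supset V_1\supset\cdots\supset V_\ell$ in which the dimensions decrease geometrically, down to a dimension bounded in terms of $q,k$.

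\textbf{Step 3: nibble and cover-down.} At each level, use a R\"odl-nibble or random greedy process, analysed via hypergraph matching theorems with pseudorandomness (codegree) control, to cover almost all $t$-spaces not inside $V_{i+1}$. Then apply a ``cover-down'' lemma: using $k$-spaces meeting $V_{i+1}$ appropriately, repair the leftover so that every uncovered $t$-space lies in $V_{i+1}$. The regularity inherited from the Grassmann scheme's transitivity under $\mathrm{GL}_n(\mathbb{F}_q)$ makes the nibble estimates routine.

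\textbf{Step 4: finish.} At the bottom level the leftover is supported in $V_\ell$, which has bounded dimension. The remainder is divisible, because the global counts and all the counts through $i$-spaces are divisible and the covered part contributes multiples. The absorber from Step 1 then swallows it, giving an exact decomposition.

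I expect the main obstacle to be the \emph{absorber construction} in Step 1. In the set case one uses integral decompositions and ``transformers'' relying on symmetric-group structure. Here the obstacle is proving that the lattice generated by characteristic vectors of $k$-spaces, restricted to $t$-spaces, equals the lattice cut out by the divisibility conditions. This is an integral version of the rank statements behind Wilson-type theorems for $q$-analogs, and it requires controlling the Smith normal form of the $t$-versus-$k$ inclusion matrix of the Grassmann scheme. I would first try to obtain it from the known $q$-analogue of Wilson's diagonal form for inclusion matrices. I would then build local absorbers as linear-algebraic combinations realised by genuine $k$-space configurations, using the symmetry of $\mathrm{GL}_n(\mathbb{F}_q)$ to embed them disjointly. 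The requirement $n\ge n_1(q,k)$ enters here, since it provides room for these disjoint embeddings.
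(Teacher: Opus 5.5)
The paper gives no proof of this statement. It is quoted from \cite{keevash2025existence} and serves only to show that the hypothesis $S_q(t,k,n)\neq\emptyset$ of Theorem~\ref{thm_main} is not vacuous. So there is nothing in the paper to compare against, and your text must stand on its own. It does not yet do so. It is an outline whose substantive steps are asserted rather than proved: building the absorber, the cover-down lemma, and the nibble estimates on the depleted structure, where $\mathrm{GL}_n(\mathbb{F}_q)$-transitivity no longer holds and pseudorandomness must be maintained by hand. More seriously, the one concrete claim on which you base Step~1 is false.

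You propose to prove that the $\mathbb{Z}$-span $M$ of the $k$-spaces, viewed as vectors on $t$-spaces, equals the lattice $\Lambda$ of integer vectors $b$ with $\qbin{k-i}{t-i}\mid\sum_{T\supseteq I}b(T)$ for every $i$-space $I$, $0\le i<t$. This fails for all $1\le t<k\le n$, because of constraints at the characteristic $p$.

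\begin{itemize}
\item Let $H$ be a hyperplane. A $k$-space $K\not\subseteq H$ contains $\qbin{k}{t}-\qbin{k-1}{t}=q^{k-t}\qbin{k-1}{t-1}$ $t$-spaces not in $H$. Hence every $b\in M$ satisfies $\sum_{T\not\subseteq H}b(T)\equiv 0\pmod{q^{k-t}}$.
\item Put $D=\prod_{i<t}\qbin{k-i}{t-i}$, so $D\equiv 1\pmod q$. Then $De_T\in\Lambda$, but it violates this congruence whenever $T\not\subseteq H$.
\item The $q$-analogue of Wilson's diagonal form can therefore only help away from $p$. A diagonal form with entries $\qbin{k-i}{t-i}$ would give the Fano plane's incidence matrix determinant $\pm\qbin{2}{1}_2=\pm3$, whereas it is $\pm24$.
\end{itemize}

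Consequently Step~1 cannot work as stated. Taking $L=\emptyset$ forces $A$ itself to be decomposable, so every absorbable $L$ must lie in $M$. Yet for $q=2$, $t=1$, $k=2$, three non-collinear points form a divisible leftover that is not in $M$: some hyperplane contains exactly two of them.

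For the same reason, Step~4 tracks the wrong invariant. The absorber should handle leftovers lying in $M$. To know that the final leftover lies in $M$, you need the all-ones vector on $\mathrm{Gr}_{n,t}(\mathbb{F}_q)$ to lie in $M$, i.e.\ the existence of an integral (signed) $q$-Steiner system. That is where the hypothesis $\qbin{k-i}{t-i}\mid\qbin{n-i}{t-i}$ has to be used. Since $M\subsetneq\Lambda$, it needs a genuine argument, not the divisibility counting you give.
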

Furthermore, a corollary about the number of $q$-Steiner systems was established.
\begin{corollary}[{\cite[Corollary 1.3]{keevash2025existence}}]\label{coro_the.number.of.q-steiner.systems}
	Under the assumption of Theorem \ref{thm_existence.of.q-Steiner.system}, for $n\geq n_{2}(q,k)$, the number of $q$-Steiner systems in $S_{q}(t,k,n)$ equals
	\begin{align*}
		N=\left((1\pm q^{-c(t,k)\cdot n})\dfrac{{n-t\brack k-t}_{q}}{\exp({k\brack t}_{q}-1)}\right)^{{n\brack t}_{q}/{k\brack t}_{q}},
	\end{align*}
	where $c(t,k)$ is a positive constant relying on $t$ and $k$.
\end{corollary}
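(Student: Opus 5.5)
The plan is the standard sandwich argument for counting designs, combining an entropy-type upper bound with a random-greedy lower bound, carried out in the hypergraph in which $q$-Steiner systems are exactly perfect matchings. Define an auxiliary hypergraph $H$ as follows.
\begin{itemize}
\item Its vertex set is $\text{Gr}_{n,t}(\mathbb{F}_{q})$, so it has $M={n\brack t}_{q}$ vertices.
\item Its edges are the sets $\{T\in\text{Gr}_{n,t}(\mathbb{F}_q): T\subseteq K\}$ for $K\in\text{Gr}_{n,k}(\mathbb{F}_{q})$.
\end{itemize}
Then $H$ is $r$-uniform with $r={k\brack t}_{q}$ and $D$-regular with $D={n-t\brack k-t}_{q}$, and $S_q(t,k,n)$ is precisely the set of perfect matchings of $H$. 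The target is therefore the Linial--Luria/Kahn-type asymptotic $\bigl((1\pm\varepsilon)D/e^{r-1}\bigr)^{M/r}$ with $\varepsilon=q^{-cn}$.

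The key structural input is that codegrees in $H$ are small relative to $D$. Two distinct $t$-subspaces $T_1,T_2$ lie in a common block only if $\dim(T_1+T_2)\le k$. In that case they lie in at most ${n-t-1\brack k-t-1}_{q}$ common blocks, which is about $q^{-(n-k)}D$. This ratio is exponentially small in $n$, and every error term below will inherit this $q^{-\Omega(n)}$ saving.

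\emph{Upper bound.} I would run the entropy argument for a uniformly random $\mathfrak{B}\in S_q(t,k,n)$. Reveal its blocks in a uniformly random order. For each vertex $T$, bound the conditional entropy of the block covering $T$ by the logarithm of the expected number of blocks through $T$ that are still disjoint from the already revealed blocks. The small codegrees make the survival of each candidate block essentially independent across its $r$ vertices. A candidate block survives until time $x\in[0,1]$ with probability about $(1-x)^{r-1}$, and integrating gives the factor $e^{-(r-1)}$ per block. This yields $|S_q(t,k,n)|\le\bigl((1+q^{-cn})D/e^{r-1}\bigr)^{M/r}$.

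\emph{Lower bound.} I would run the random greedy matching process (the nibble) on $H$ until only a $q^{-c'n}$-fraction of vertices remains uncovered.
\begin{itemize}
\item Using the differential-equation method, together with concentration controlled by the small codegrees, show that the number of available blocks at each step stays close to its deterministic trajectory.
\item Multiply the numbers of choices over all steps and divide by the number of orderings of the chosen blocks. This gives about $\bigl(D/e^{r-1}\bigr)^{M/r}$ distinct partial systems, up to a factor $(1-q^{-cn})^{M/r}$.
\item Show that with high probability each such partial system extends to a full $q$-Steiner system. Distinct partial systems may extend to the same system, so correct for this overcounting as well; it costs only a subexponential-in-$M$ factor, because the leftover is tiny.
\end{itemize}

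The extendability step is the main obstacle. Theorem \ref{thm_existence.of.q-Steiner.system} as stated only gives existence in the complete Grassmannian. Here one needs the stronger form proved in \cite{keevash2025existence}: the iterative absorption or randomized algebraic construction framework also completes designs inside any sufficiently typical and extendable sub-complex. I would first reserve a random absorbing structure before running the nibble. I would then verify that the leftover $t$-subspaces, together with the unused blocks, form a complex satisfying the required typicality and divisibility conditions; divisibility is inherited from the hypotheses of Theorem \ref{thm_existence.of.q-Steiner.system}. Checking that the random greedy process preserves typicality with exponentially small error is where most of the work would go.
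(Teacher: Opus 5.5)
The paper contains no proof of this statement. It is quoted from \cite{keevash2025existence}, and inside the paper it serves only to guarantee $N\ge 1$. Even that is inessential: $N$ enters $\kappa$ and every $\kappa_i$ as a common factor, so it never affects which $\mu_r$ vanish, and the standing hypothesis that a $q$-Steiner system exists is all Theorem~\ref{thm_main} needs. There is therefore no argument in the paper to compare yours with.

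Your outline is the standard route for counting results of this kind: a Linial--Luria entropy upper bound, and a random greedy lower bound followed by completion of the leftover, as in Keevash's counting of ordinary Steiner systems. Its combinatorial inputs check out:
\begin{itemize}
\item $H$ is ${k\brack t}_q$-uniform and ${n-t\brack k-t}_q$-regular, and its perfect matchings are exactly the $q$-Steiner systems.
\item The codegree ratio is $[k-t]/[n-t]\approx q^{-(n-k)}$.
\item Divisibility of the leftover is preserved: for every $i$-subspace $I$ of a block $K$, exactly ${k-i\brack t-i}_q$ of the $t$-subspaces of $K$ contain $I$.
\end{itemize}
Still, what you have is a roadmap rather than a proof. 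The decisive step, completing a sparse typical leftover together with a pre-reserved random absorber, is strictly stronger than Theorem~\ref{thm_existence.of.q-Steiner.system}. It has to be taken from the internals of \cite{keevash2025existence}, as you acknowledge. Moreover it must work when the reserved part and the leftover have density only $q^{-\Omega(n)}$. The reason is that the blocks not chosen by the nibble cost a factor of roughly $D^{-\epsilon}$ per block, where $\epsilon$ is the uncovered fraction, so you need $\epsilon\log D\le q^{-cn}$.

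Two quantitative points need care if you write this out.

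First, an overcounting loss that is merely ``subexponential in $M$'' is too weak; you need it to be $\exp\bigl(O(q^{-cn})M/r\bigr)$. This does hold, because a fixed system contains at most $\binom{M/r}{\epsilon M/r}\le\exp\bigl(\epsilon\log(e/\epsilon)M/r\bigr)$ partial systems of the relevant size, and $\epsilon=q^{-c'n}$.

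Second, both of your error terms are governed by exponents of order $n/(r-1)$:
\begin{itemize}
\item The Jensen step leaves an excess $\int_0^1\log(1+Dy^{r-1})\,dy-\bigl(\log D-(r-1)\bigr)\asymp D^{-1/(r-1)}$. It comes from the block covering $T$, which is always available, so this method cannot avoid it.
\item The random greedy process cannot be tracked past a leftover fraction of about $q^{-(n-k)/(r-1)}$.
\end{itemize}
Since $r-1={k\brack t}_q-1\approx q^{t(k-t)}$, the constant $c$ your argument produces depends on $q$ as well. With $q$ fixed this is harmless. It is not, however, literally a constant depending only on $(t,k)$, as the statement is transcribed here, so check the precise form of the constant in the source.
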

Therefore, throughout this paper we may assume the existence of $q$-Steiner systems, as we restrict our analysis to the quadruples $(t,k,n,q)$ that admit such systems.\par 
Historically, there have been numerous estimates on the number of combinatorial $t$-$(n,k,\lambda)$ designs from various points of view. For example, Graver and Jurkat \cite{graver1973module} initially viewed (signed) $t$-designs as integral functions, say assigning each block an integral multiplicity, thereby investigated the $\mathbb{Z}$-module structure of combinatorial designs. Graham et al.\ \cite{graham1980structure} derived explicit basis for such $\mathbb{Z}$-modules by introducing associated polynomials, and the viewpoint of associating $t$-designs with module structure turns out to be applicable for constructing new $t$-designs \cite{hwang1986structure}. Another approach is to find the dimension of the vector spaces generated by those combinatorial structures. In particular, we define \textit{the space generated by $q$-Steiner systems} as follows.
\begin{definition}
	For any $q$-Steiner system $\mathfrak{B}\in S_{q}(t,k,n)$, let the \textit{characteristic vector} $\chi_\mathfrak{B}\in\left\{ 0,1\right\}^{{n\brack k}_{q}}$ of $\mathfrak{B}$ be indexed by elements in $\text{Gr}_{n,k}(\mathbb{F}_{q})$, where the $A$-entry equals $1$ when $A$ serves as a block in $\mathfrak{B}$, and equals $0$ otherwise. Then the \textit{space generated by all $q$-Steiner systems in $S_{q}(t,k,n)$} is defined to be the $\mathbb{Q}$-vector space
	\begin{align*}
		\mathfrak{V}_{q}(t,k,n):=\text{span}_{\mathbb{Q}}\left\{\chi_{\mathfrak{B}}\mid\mathfrak{B}\in S_{q}(t,k,n)\right\}.
	\end{align*}
	Moreover, by the \textit{dimension of $q$-Steiner systems in $S_{q}(t,k,n)$} we mean $\dim_{\mathbb{Q}}\mathfrak{V}_{q}(t,k,n)$.
\end{definition}
Using such notation, Ghodrati \cite{ghodrati2019dimension} proved that for ordinary Steiner systems with parameters $t,k$ and $n$, the corresponding dimension is $\binom{n}{k}-\binom{n}{t}+1$, assuming $1\leq t\leq k\leq n/2$ and that there exists at least one such Steiner system. This has recently been generalized to designs in the bilinear forms scheme by the first and the third author of this paper \cite{li2026dimension}. In this paper, we show that the dimension of $\mathfrak{V}_{q}(t,k,n)$ is given by the following theorem.
\begin{theorem}\label{thm_main}
	Let $q$ be a prime power, let $t,k,n$ be integers satisfying $1\leq t<k\leq n$ and assume that the quadruple $(t,k,n,q)$ admits at least one $q$-Steiner system in $S_{q}(t,k,n)$. Then the dimension of $q$-Steiner systems in $S_{q}(t,k,n)$ is equal to $\dim_{\mathbb{Q}}\mathfrak{V}_{q}(t,k,n)={n\brack k}_{q}-{n\brack t}_{q}+1$.
\end{theorem}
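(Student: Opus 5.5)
Let $W_{t,k}$ denote the ${n\brack t}_q\times{n\brack k}_q$ inclusion matrix between $t$-subspaces and $k$-subspaces. Every $\chi_{\mathfrak B}$ satisfies $W_{t,k}\chi_{\mathfrak B}=\mathbf{1}$. The plan is to show that $\mathfrak V_q(t,k,n)$ equals the affine hull $\{x: W_{t,k}x\in\mathbb Q\mathbf 1\}$, and to compute its dimension. For the upper bound, each $\chi_{\mathfrak B}$ lies in $\{x : W_{t,k}x = c\mathbf 1,\ c\in\mathbb Q\}$. By Kantor's theorem (the $q$-analogue of Gottlieb's result), $W_{t,k}$ has full row rank ${n\brack t}_q$ when $t\le k\le n-t$; this holds because $n\ge 2k$. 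Hence this space has dimension ${n\brack k}_q-{n\brack t}_q+1$. So the real content is the lower bound: the span of the Steiner systems contains $\ker W_{t,k}$ together with one $\chi_{\mathfrak B}$, and the latter is not in the kernel.

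For the lower bound I would use symmetry. $G=\mathrm{GL}_n(\mathbb F_q)$ acts on $S_q(t,k,n)$, so $\mathfrak V$ is a $G$-submodule of $\mathbb Q^{\mathrm{Gr}_{n,k}}$. Over $\mathbb C$ this permutation module decomposes multiplicity-freely as $V_0\oplus V_1\oplus\cdots\oplus V_k$, the eigenspaces of the Grassmann scheme. Moreover $\ker W_{t,k}=V_{t+1}\oplus\cdots\oplus V_k$, since $W_{t,k}^\top$ has image $V_0\oplus\cdots\oplus V_t$. Because the decomposition is multiplicity-free, $\mathfrak V\otimes\mathbb C$ is a direct sum of some of the $V_j$. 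It therefore suffices to show that $\mathfrak V$ is not orthogonal to $V_j$ for each $j\in\{0,t+1,\dots,k\}$. The case $j=0$ is clear because $\chi_{\mathfrak B}\ne0$. For $j\ge t+1$, the vector $\chi_{\mathfrak B}$ has a nonzero projection $E_j\chi_{\mathfrak B}$ onto $V_j$ if and only if $\chi_{\mathfrak B}^\top E_j\chi_{\mathfrak B}\ne0$. Writing $E_j=\frac{1}{{n\brack k}_q}\sum_{i}Q_j(i)A_i$ in terms of the distance matrices $A_i$ (with $A$, $B$ at distance $i$ when $\dim(A\cap B)=k-i$), this becomes
\[
\sum_{i=0}^{k} Q_j(i)\,a_i\neq 0,\qquad a_i:=\chi_{\mathfrak B}^\top A_i\chi_{\mathfrak B}.
\]

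The inner distribution $(a_i)$ of a $q$-Steiner system is fully determined. Two distinct blocks meet in dimension at most $t-1$, so $a_i=0$ for $0<i<k-t+1$, and $a_0=|\mathfrak B|={n\brack t}_q/{k\brack t}_q$. The remaining $t$ values $a_{k-t+1},\dots,a_k$ are forced by the fact that $\mathfrak B$ is an $i$-design for all $i\le t$ (Theorem 1.1). Equivalently, $E_j\chi_{\mathfrak B}=0$ for $1\le j\le t$, which gives $t$ linear equations. Alternatively one can count, for each block $B$ and each $s<t$, the blocks meeting $B$ in exactly an $s$-space via inclusion–exclusion over subspaces of $B$. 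Both routes give the explicit values of the $a_i$.

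The main obstacle is the final nonvanishing: showing that $\sum_i Q_j(i)a_i\ne0$ for every $t+1\le j\le k$, where the $Q_j$ are the $q$-Hahn/generalized Eberlein dual eigenvalues expressed as $_3\phi_2$ sums. My first attempt would replace this sum by an equivalent quantity. Since $E_j\chi_{\mathfrak B}$ is determined by the "$j$-intersection numbers", I would use the identity $\|E_j\chi\|^2\propto$ a signed combination of $\|W_{s,k}\chi\|^2$ for $s\le j$. This expresses $\chi^\top E_j\chi$ through binomial-type sums in the known $a_i$, reducing to an alternating sum of the form $\sum_s(-1)^{j-s}q^{\binom{j-s}{2}}{j\brack s}_q\,\lambda_s(\ldots)$. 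I would then show this sum is nonzero, ideally by identifying it with a product formula through a $q$-Chu–Vandermonde or $q$-Saalschütz summation, or at least by showing that its dominant term has a definite sign. I would also use that a Steiner system cannot be a $(t+1)$-design: equality $\lambda_{t+1}$-regularity would contradict $\lambda=1$ with $k<n$. This already handles $j=t+1$ and suggests the general pattern. Once nonvanishing is established for all $j\in\{t+1,\dots,k\}$, the span contains $V_0\oplus V_{t+1}\oplus\cdots\oplus V_k$, matching the upper bound. Since both $\mathfrak V$ and the upper-bound space are defined over $\mathbb Q$, their dimensions agree, giving ${n\brack k}_q-{n\brack t}_q+1$.
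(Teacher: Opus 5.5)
Your reduction is correct, but the proof stops where the real work begins: you never show $\chi_{\mathfrak B}^{T}E_j\chi_{\mathfrak B}\neq 0$ for $t+2\le j\le k$. This is the heart of the statement, not a side issue. If $U$ has the $\chi_{\mathfrak B}$ as columns and $UU^{T}=\sum_r\mu_rE_r$, then
\[
m_j\mu_j=\operatorname{tr}(U^{T}E_jU)=\sum_{\mathfrak B}\chi_{\mathfrak B}^{T}E_j\chi_{\mathfrak B}=N\,\chi_{\mathfrak B}^{T}E_j\chi_{\mathfrak B},
\]
since all systems share one inner distribution. So your quantity is a positive multiple of the paper's eigenvalue $\mu_j$, and the nonvanishing you leave open is exactly what Section 3 and the final argument establish.

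Your Kantor/multiplicity-free framing gives the upper bound and the vanishing on $V_1\oplus\cdots\oplus V_t$ more conceptually than the paper's explicit check. Your $j=t+1$ argument is also right, since $\lambda_{t+1}=\qnum{k-t}/\qnum{n-t}\notin\mathbb Z$. But it does not indicate a pattern. For $j\ge t+2$, the condition $E_j\chi_{\mathfrak B}=0$ together with $E_{t+1}\chi_{\mathfrak B}\neq 0$ is not a design condition, so there is nothing to contradict. In your $\|W_{s,k}\chi\|^2$ route, write $W_{s,k}^{T}W_{s,k}=\sum_{r\le s}\theta_{s,r}E_r$. Then $\theta_{j,j}\|E_j\chi\|^2$ is a positive quantity minus a nonnegative combination of the $\|E_r\chi\|^2$ with $t+1\le r<j$, and its sign is not evident. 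The remaining suggestions (a product formula via $q$-Saalsch\"utz, a dominant term of definite sign) are not arguments. A product evaluation should not be expected anyway, because the quantity turns out to be a truncated $q$-Vandermonde-type sum.

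The paper closes this step by explicit evaluation, and the closing needs more than a sketch:
\begin{itemize}
\item It computes $\kappa_i$ (your $a_i$, normalized) by inclusion--exclusion in Lemma~\ref{lemma_intersect.counting} and Proposition~\ref{prop_kappa.i}.
\item It sums against the generalized Eberlein polynomials using the ${}_3\phi_2$ transformation (Lemmas~\ref{lem:identity1}--\ref{lem:identity3}, Propositions~\ref{prop:threesums1}--\ref{prop:threesums2}) and reaches the closed form of Theorem~\ref{thm_eigenvalue.simplified}.
\item Evaluating the full sum over $0\le i\le r-1$ by Proposition~\ref{prop_kurihara.triterms} and upper negation then gives
\[
\mu_r=(-1)^{r+1}\kappa\,\frac{q^{\binom{r}{2}-kr+k}}{\qbin{n-k-1}{r-1}}\sum_{i=t}^{r-1}(-1)^{i}q^{\binom{i}{2}}\qbin{k-i-1}{r-i-1}\qbin{n-r}{i}.
\]
\end{itemize}
The $i=t$ term is $\pm q^{\binom t2}$ times a nonzero product of Gaussian binomials, which is $\equiv 1\pmod q$. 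The terms with $i>t$ are divisible by $q^{\binom{t+1}{2}}$. Hence the sum is $q^{\binom t2}$ times an integer $\equiv\pm1\pmod q$, so it is nonzero. The argument that works is this $q$-adic one, and it only becomes visible after $\sum_iQ_j(i)a_i$ has been evaluated in closed form. Without such an evaluation for $t+2\le j\le k$, your lower bound remains unproved.
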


\section{Preliminaries}
\subsection{$q$-Steiner systems and the Grassmann scheme}
From this section on, we write ${n\brack k}$ for the Gaussian binomial coefficient ${n\brack k}_{q}$ for brevity, omitting the fixed prime power $q$ unless otherwise specified.\par 
Let $U$ be the matrix whose rows and columns are indexed by elements in $\text{Gr}_{n,k}(\mathbb{F}_{q})$ and $q$-Steiner systems in $S_{q}(t,k,n)$, respectively. The $(X,\mathfrak{B})$-entry of $U$ equals $1$ if $X\in\mathfrak{B}$ and equals $0$ otherwise. In other words, the column vectors of $U$ consist of characteristic vectors $\chi_{\mathfrak{B}}$ of $\mathfrak{B}\in S_{q}(t,k,n)$, and $U$ is called the \textit{incidence matrix} of $S_{q}(t,k,n)$. To study the incidence matrix $U$, we need the Grassmann scheme which is defined as follows.
\begin{definition}
	In the Grassmannian $\text{Gr}_{n,k}(\mathbb{F}_{q})$, define relations
	\begin{align*}
		R_{i}:=\left\{(X,Y)\mid X,Y\in \text{Gr}_{n,k}(\mathbb{F}_{q}),\dim{(X\cap Y)}=k-i\right\},~i=0,1,...,k.
	\end{align*}
	Then
	\begin{align*}
		G_{q}(n,k):=(\text{Gr}_{n,k}(\mathbb{F}_{q}),\left\{ R_{i}\right\}_{0\leq i\leq k})
	\end{align*}
	forms an association scheme and is called a \textit{Grassmann scheme}.
\end{definition}
Blocks in a $q$-Steiner system $\mathfrak{B}$ are naturally distributed pairwise into the relations $R_{0},R_{1},...,R_{k}$, for which $\mathfrak{B}$ is also called a \textit{Steiner system in the Grassmann scheme}. Let $A_{i}$ be the adjacency matrices of the relations $R_{i}$, $0\leq i\leq k$. The rows and columns of each $A_{i}$ are both indexed by $\text{Gr}_{n,k}(\mathbb{F}_{q})$, where the $(X,Y)$-entry of $A_{i}$ equals $1$ when $(X,Y)\in R_i$ and equals $0$ otherwise. For distinct $X,Y\in \text{Gr}_{n,k}(\mathbb{F}_{q})$ with $\dim(X\cap Y)=i$, $0\leq i\leq k$, we define
\begin{align}\label{eq_definition.kappa}
	\kappa:=\#\left\{\mathfrak{B}\in S_{q}(t,k,n)\mid X\in\mathfrak{B}\right\}
\end{align}
and
\begin{align}\label{eq_definition.kappa.i}
	\kappa_{i}:=\#\left\{\mathfrak{B}\in S_{q}(t,k,n)\mid X\in\mathfrak{B},Y\in\mathfrak{B}\right\},~i=0,1,...,k.
\end{align}
Suppose $\kappa$ is defined for an element $X'\in \text{Gr}_{n,k}(\mathbb{F}_{q})$ distinct from $X$, then there exists an automorphism of $\mathbb{F}_{q}^n$ sending $X'$ to $X$, under which the Steiner systems in $G_{q}(n,k)$ having $X'$ as a block become the ones having $X$ as a block. Hence $\kappa$ is independent from the choice of $X$. The same argument goes for $\kappa_{i}$ by considering the automorphism of $\mathbb{F}_{q}^n$ that sends $X\cap Y$ to $X'\cap Y'$, so $\kappa_{i}$ depends only on $\dim(X\cap Y)=i$. Note that for each $i$-dimensional subspace $I$, since $\mathfrak{B}$ is a Steiner system, there exist $X,Y\in\mathfrak{B}$ such that $X\neq Y$ and $X\cap Y=I$ only if $0\leq i\leq t-1$. This implies
\begin{align*}
	\text{$\kappa_{i}=0$~~for all~~$t\leq i\leq k$}.
\end{align*}
Therefore, we obtain
\begin{align}\label{eq_gram.matrix.decomposition}
	UU^{T}=\kappa I+\sum_{i=0}^{t}\kappa_{i}A_{k-i}.
\end{align}\par 
If we assume the conditions of Corollary \ref{coro_the.number.of.q-steiner.systems}, which ensure that $S_{q}(t,k,n)$ is nonempty, then the dimension of $q$-Steiner systems in $S_{q}(t,k,n)$ is exactly $\text{rank}\left(U\right)$. This yields
\begin{align*}
	\begin{split}
		\dim_{\mathbb{Q}}\mathfrak{V}=\text{rank}\left(U\right)=\text{rank}\left(UU^{T}\right)&={n\brack k}-\text{null}\left(UU^{T}\right)\\
		&={n\brack k}-\text{multiplicity of zero as an eigenvalue of $UU^{T}$},
	\end{split}
\end{align*}
where $\text{null}(\cdot)$ refers to the nullity of a matrix. Hence, to determine the dimension of $q$-Steiner systems in $S_{q}(t,k,n)$ it suffices to study the eigenvalues of the Gram matrix $UU^{T}$, which we will also refer to as the \textit{eigenvalues of $q$-Steiner systems}.
\subsection{Some well-known combinatorial identities}
To compute the eigenvalues of $UU^{T}$, we need the \emph{$q$-Pochhammer symbol}~$\pochq{a}{n}$ defined by
\[
\pochq{a}{0}=1,\quad \pochq{a}{n}=\prod_{i=0}^{n-1}(1-aq^i)
\]
for a positive integer $n$ and a real number $a$. The $q$-Pochhammer symbol is connected to the Gaussian binomial coefficient by 
\begin{align}\label{eq:qbin_only_poch}
	\qbin{n}{k}=\frac{\pochq{q}{n}}{\pochq{q}{k}\pochq{q}{n-k}}.
\end{align}
We will also use the following well-known identities:
\begin{align}\label{eq:qbin_only_poch_sum}
	\qbin{n+k}{n}=\frac{\pochq{q^{k+1}}{n}}{\pochq{q}{n}}
\end{align}
\begin{align}\label{eq:qbin_poch}
	\qbin{n}{k}=&\,\frac{\pochq{q^{-n}}{k}}{\pochq{q}{k}} (-1)^k q^{kn-\binom{k}{2}},
\end{align}
which together with \eqref{eq:qbin_only_poch} can be found in \cite[\S~1.9]{koekoek2010hypergeometric}. It is also well known that
\begin{align}\label{eq:poch_diff}
	\pochq{q}{n-k}=\frac{\pochq{q}{n}}{\pochq{q^{-n}}{k}} (-1)^k q^{\binom{k}{2}-nk}
\end{align}
\begin{align}\label{eq:poch_sum}
	\pochq{q}{n+k}=\pochq{q}{n}\pochq{q^{n+1}}{k},
\end{align}
see \cite[Eq.~(1.8.10), (1.8.16)]{koekoek2010hypergeometric}. The Gaussian binomial coefficient satisfies the upper negation formula
\begin{align}\label{eq:uppernegation}
	\qbin{n}{k}=(-1)^k q^{kn-\binom{k}{2}}\qbin{k-n-1}{k},
\end{align}
as well as the $q$-binomial theorem
\begin{align}\label{eq:qbinomialthm}
	\sum_{k=0}^n \qbin{n}{k} q^{\binom{k}{2}} x^k y^{n-k}=\prod_{i=0}^{n-1} (xq^i+y),
\end{align}
see \cite[Ex.~1.2(iv)]{gasper2004basic} and \cite[\S~3, Ex.~45.a]{stanley2012enumerative}, respectively.
We also need the \emph{$q$-hypergeometric function}~$\mbox{}_{3}\phi_{2}$ defined by
\[
\hyp{a_{1},\dots,a_{r}}{b_{1},\dots,b_s}{q,z}=\sum_{\ell=0}^{\infty} \frac{(a_{1};q)_\ell\cdots(a_{r};q)_\ell}{(b_{1};q)_\ell\cdots(b_s;q)_\ell}\,\frac{z^\ell}{(q;q)_\ell}.
\]
Moreover, we will use the transformation formula
\begin{align}\label{eq:transf32}
	\hyp{q^{-n}, a, b}{c, d}{q;q}=\frac{\pochq{a^{-1}b^{-1}cd}{n}}{\pochq{d}{n}} \left(\frac{ab}{c}\right)^n \hyp{q^{-n}, a^{-1}c, b^{-1}c}{c, a^{-1}b^{-1}cd}{q;q},
\end{align}
which can be found in \cite[Eq.~(III.11)]{gasper2004basic}. The following identities will also be used.
\begin{proposition}[{\cite[Proposition A.3(4)]{kurihara2013character}}]\label{prop_kurihara.triterms}
	For any integers $x,y$ and nonnegative integers $h,p$,
	\begin{align*}
		\begin{split}
			{x\brack h}{y-x\brack p-h}&=\sum_{v=h}^{p}(-1)^{v-h}{v\brack h}{y-v\brack p-v}{x\brack v}q^{-(p-h)(x-h)+{v-h\choose 2}}\\
			&=\sum_{v=h}^{p}(-1)^{v-h}{v\brack h}{y-v\brack p-v}{x\brack v}q^{(v-h)(y-x-p+h)+{v-h+1\choose 2}}.
		\end{split}
	\end{align*}
\end{proposition}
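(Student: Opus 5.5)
We would prove both forms of Proposition~\ref{prop_kurihara.triterms} by reducing them to the $q$-Chu--Vandermonde identity. The plan is to strip off the common factor ${x\brack h}$, convert via upper negation \eqref{eq:uppernegation}, apply Vandermonde, and negate back.

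\emph{Reduction.} First use the subset-of-subset identity ${x\brack v}{v\brack h}={x\brack h}{x-h\brack v-h}$, which follows directly from \eqref{eq:qbin_only_poch}, or from \eqref{eq:qbin_poch} when $x$ is negative. Put $j=v-h$, $m=p-h$, $a=x-h$, $b=y-h$. Then the first equality becomes, after cancelling ${x\brack h}$,
\begin{align*}
{b-a\brack m}=q^{-ma}\sum_{j=0}^{m}(-1)^{j}q^{\binom{j}{2}}{a\brack j}{b-j\brack m-j}.
\end{align*}
The cancellation is legitimate because both sides of the original identity carry the factor ${x\brack h}$. In particular, when ${x\brack h}=0$ both sides vanish termwise, since every term contains ${x\brack v}{v\brack h}={x\brack h}{x-h\brack v-h}$.

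\emph{Vandermonde step.} Next apply the upper negation formula \eqref{eq:uppernegation} to ${b-j\brack m-j}$, turning it into $\pm q^{(\cdot)}{m-b-1\brack m-j}$. Collect the powers of $q$ and the signs. The factor $(-1)^j$ cancels against the sign from negation, and the exponents $\binom{j}{2}$ combine with $(m-j)(b-j)-\binom{m-j}{2}$ into a term of the form $c+j(\text{linear})$. The sum then takes the shape
\begin{align*}
\sum_j q^{\,j(\cdot)}{a\brack j}{m-b-1\brack m-j},
\end{align*}
which is the $q$-Chu--Vandermonde identity. That identity can itself be obtained by comparing coefficients of $z^m$ in the product of two instances of the $q$-binomial theorem \eqref{eq:qbinomialthm}, at least for nonnegative upper arguments. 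This yields ${a+m-b-1\brack m}$ times an explicit power of $q$. A final upper negation \eqref{eq:uppernegation} turns ${a+m-b-1\brack m}$ back into ${b-a\brack m}$, and the leftover power of $q$ should be exactly $q^{ma}$.

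\emph{Extension to all integers.} The Vandermonde step is cleanest for nonnegative arguments. For fixed $h$ and $p$, however, each side is a Laurent polynomial in $Q=q^{x}$ and $R=q^{y}$: every Gaussian binomial with fixed lower index is such a polynomial by \eqref{eq:qbin_poch}. Since the identity then holds on an infinite grid of values $(Q,R)$, it holds identically. This extends it to all integers $x,y$.

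\emph{Second form.} Replace $q$ by $q^{-1}$ in the first form and use ${n\brack k}_{q^{-1}}=q^{-k(n-k)}{n\brack k}_q$ on every factor. Multiplying through by the appropriate power of $q$ should reproduce the exponent $(v-h)(y-x-p+h)+\binom{v-h+1}{2}$. Alternatively, rerun the Vandermonde argument with the other placement of the negation.

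The main obstacle is purely bookkeeping: tracking the quadratic exponents of $q$ through the two upper negations, and checking that the $j$-dependent part collapses precisely to the Vandermonde form while the constant part equals $q^{ma}$. We would verify this first on the small cases $m=1,2$.
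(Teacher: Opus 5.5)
Your proof is correct. There is nothing in the paper to compare it with: the paper does not prove this proposition but quotes it from Kurihara's Proposition A.3(4). Your reduction to $q$-Chu--Vandermonde is therefore an independent, self-contained derivation. It uses only the upper negation formula \eqref{eq:uppernegation} and the $q$-binomial theorem \eqref{eq:qbinomialthm}, both already in the paper's preliminaries, and so removes the paper's dependence on the citation for this identity.

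The bookkeeping closes exactly as you predict. After negating $\qbin{b-j}{m-j}$, the $j$-th summand is $(-1)^m q^{mb-\binom{m}{2}}\,q^{j(j-b-1)}\qbin{a}{j}\qbin{m-b-1}{m-j}$. This is precisely the Vandermonde form $\qbin{a+c}{m}=\sum_j \qbin{a}{j}\qbin{c}{m-j}q^{j(c-m+j)}$ with $c=m-b-1$. That form comes from \eqref{eq:qbinomialthm} by splitting $\prod_{i=0}^{a+c-1}(1+zq^i)$ into the first $c$ and the last $a$ factors. The final upper negation then leaves exactly $q^{ma}$.

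For the second form, substitute $q\mapsto q^{-1}$ and use $\qbin{n}{k}_{q^{-1}}=q^{-k(n-k)}\qbin{n}{k}_q$, which holds for all integers $n$. After multiplying by $q^{h(x-h)+(p-h)(y-x-p+h)}$, the exponent of the $v$-th term becomes $j(b-a-m)+\binom{j+1}{2}$, which is the claimed one.

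Two points should be stated more precisely.
\begin{enumerate}
\item For the Laurent-polynomial extension, name the set where the Vandermonde step is actually justified. It is $\{x\ge h\}\times\{y\le p-1\}$, where both upper indices $a$ and $m-b-1$ are nonnegative. Because it is a product of two infinite sets, the vanishing argument applies. Alternatively, first extend Vandermonde itself to all integer upper indices by the same argument; every remaining step is then valid for all integers.
\item The $q\mapsto q^{-1}$ step needs the first form for base $q^{-1}$. This is fine, because nothing in your argument uses that $q$ is a prime power: treat $q$ as an indeterminate, or as any real $q>0$ with $q\neq 1$.
\end{enumerate}
Also, no cancellation is needed: you prove the reduced identity and then multiply it by $\qbin{x}{h}$.
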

\begin{proposition}[{\cite[Lemma 3]{lv2012eigenvalues}}]\label{prop_lv.wang.binomial}
	For any integer $x$ and nonnegative integer $a$,
	\begin{align*}
		\sum_{v=0}^{a}(-1)^v{x\brack v}q^{{v\choose 2}}=q^{xa}{a-x\brack a}.
	\end{align*}
\end{proposition}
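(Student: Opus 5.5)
We prove the identity of Proposition \ref{prop_lv.wang.binomial} by induction on $a$, with $x$ an arbitrary (possibly negative) integer held fixed. Throughout, ${m\brack j}$ for integer $m$ and $j\geq 0$ means the product formula from the introduction. For fixed $j$ this is a polynomial identity in $q^{m}$. Hence the two $q$-Pascal rules
\begin{align*}
{m\brack j}={m-1\brack j-1}+q^{j}{m-1\brack j}={m-1\brack j}+q^{m-j}{m-1\brack j-1}
\end{align*}
and the upper negation formula \eqref{eq:uppernegation} hold for every integer $m$, not only for $m\geq j$. We only need to check them for $m\geq j$ and then extend by polynomiality in $q^{m}$.

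\emph{Base case.} For $a=0$ the left-hand side is $1$ and the right-hand side is $q^{0}{-x\brack 0}=1$.

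\emph{Inductive step.} Assume the identity for $a$. It then suffices to show
\begin{align*}
q^{x(a+1)}{a+1-x\brack a+1}-q^{xa}{a-x\brack a}=(-1)^{a+1}{x\brack a+1}q^{{a+1\choose 2}}.
\end{align*}
We apply the second Pascal rule with $m=a+1-x$ and $j=a+1$, so that $q^{m-j}=q^{-x}$. This gives
\begin{align*}
{a+1-x\brack a+1}={a-x\brack a+1}+q^{-x}{a-x\brack a}.
\end{align*}
Multiplying by $q^{x(a+1)}$, the term $q^{xa}{a-x\brack a}$ cancels. The left-hand side of the target therefore reduces to $q^{x(a+1)}{a-x\brack a+1}$.

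Next we apply \eqref{eq:uppernegation} with $n=a-x$ and $k=a+1$. Since $k-n-1=x$, this gives
\begin{align*}
{a-x\brack a+1}=(-1)^{a+1}q^{(a+1)(a-x)-{a+1\choose 2}}{x\brack a+1}.
\end{align*}
The total power of $q$ is then
\begin{align*}
x(a+1)+(a+1)(a-x)-{a+1\choose 2}=a(a+1)-{a+1\choose 2}={a+1\choose 2},
\end{align*}
which is exactly the required exponent. This completes the induction.

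The only point needing care is that Pascal's rule and upper negation are used with negative upper arguments. This is justified by the polynomial-identity remark at the start, and it is the sole potential obstacle.
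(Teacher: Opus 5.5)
Your proof is correct. The Pascal step with $m=a+1-x$, $j=a+1$ reduces the inductive claim to $q^{x(a+1)}\qbin{a-x}{a+1}$. Upper negation, with $k-n-1=x$, together with the exponent count $x(a+1)+(a+1)(a-x)-\binom{a+1}{2}=\binom{a+1}{2}$, then finishes it.

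The paper itself gives no proof of this statement; it imports it from \cite[Lemma 3]{lv2012eigenvalues}. Your argument is therefore a self-contained replacement rather than a reconstruction, and a comparison with the paper's own toolkit is worth making.

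Within the paper, the identity is a one-line special case of Proposition \ref{prop_kurihara.triterms}. Take $h=0$ and $y=p=a$ in its first form. Every factor $\qbin{a-v}{a-v}$ equals $1$, and you get $\qbin{a-x}{a}=q^{-ax}\sum_{v=0}^{a}(-1)^v\qbin{x}{v}q^{\binom{v}{2}}$, which is exactly the claim. The paper already uses this specialization, in the case $x=a=d$, in \eqref{eq_counting.reduction.lambda}. That route is shorter, but it rests on another cited black box. Yours needs only the $q$-Pascal rule and \eqref{eq:uppernegation}.

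Your induction is equivalent to a telescoping sum. Write $(-1)^v q^{\binom{v}{2}}\qbin{x}{v}=T_v-T_{v-1}$ with $T_v=(-1)^v q^{\binom{v+1}{2}}\qbin{x-1}{v}$; upper negation then identifies $T_a$ with $q^{xa}\qbin{a-x}{a}$.

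Your care about negative upper arguments is well placed. The paper applies the identity in \eqref{eqs_counting.reduction.1} with $x=k-i$ and $a=t-i$, so that $a-x=t-k<0$.

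One cosmetic point concerns upper negation. The description ``check for $m\ge j$ and extend by polynomiality'' does not quite fit there, because even for $n\ge k$ the right-hand side contains $\qbin{k-n-1}{k}$ with a negative top. However, the formula holds for all integers $n$ directly from the product definition. After reindexing, the numerator of $\qbin{k-n-1}{k}$ is $\prod_{i=0}^{k-1}(q^{i-n}-1)$. Pulling $-q^{-(n-i)}$ out of each factor gives the result, so nothing is lost.
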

\subsection{M{\"o}bius inversion}
We recall some basics of M{\"o}bius inversion and we refer to \cite[\S~1.2]{hou2018lectures} for a comprehensive introduction.
\begin{definition}
	A \textit{partially ordered set} (or a \textit{poset} for short) refers to a nonempty set $X$ with binary relation $\leq$ satisfying\par 
	(i) $x\leq x$ for all $x\in X$;\par 
	(ii) if $x\leq y$ and $y\leq z$ for $x,y,z\in X$, then $x\leq z$;\par
	(iii) if $x\leq y$ and $y\leq x$ for $x,y\in X$, then $x=y$.\par 
	The relation $\leq$ is called a \textit{partial order} on $X$. Furthermore, if for any $x,y\in X$, the interval $[x,y]:=\left\{ z\in X\mid x\leq z, z\leq y\right\}$ has finite cardinality, then the poset $(X,\leq)$ is said to be \textit{locally finite}.
\end{definition}
\begin{definition}
	Let $(X,\leq)$ be a locally finite poset. The \textit{M{\"o}bius function} on $X$ is defined to be a function
	\begin{align*}
		\mu:X\times X\to\mathbb{Z}
	\end{align*}
	satisfying $\mu(x,y)=0$ whenever $x\not\leq y$, and when $x\leq y$ one has
	\begin{align*}
		\sum_{z\in[x,y]}\mu(x,z)=\delta(x,y),
	\end{align*}
	where $\delta(x,y)=\begin{cases}
		1, & x=y\\
		0, & x\neq y
	\end{cases}$ denotes the \textit{Kronecker symbol}.
\end{definition}
It can be deduced from the definition that the M{\"o}bius function of a locally finite poset exists and is unique. We need the following celebrated formula of the so-called \textit{M{\"o}bius inversion}.
\begin{lemma}\label{lemma_mobius.inversion}
	Let $(X,\leq)$ be a locally finite poset with M{\"o}bius function $\mu$. Let $A$ be an Abelian group and let $\mathcal{N}:X\to A$ be a function. Fix $x_{0}\in X$, and for $x\in X$ define
	\begin{align*}
		\mathcal{N}_{\leq}(x):=\sum_{y\in[x_{0},x]}\mathcal{N}(y).
	\end{align*}
	Then
	\begin{align*}
		\mathcal{N}(x)=\sum_{y\in[x_{0},x]}\mu(y,x)\mathcal{N}_{\leq}(y)
	\end{align*}
	for all $x\in X$ with $x\geq x_{0}$.
\end{lemma}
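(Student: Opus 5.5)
The plan is to prove Lemma~\ref{lemma_mobius.inversion} directly from the defining property of the M\"obius function: substitute the definition of $\mathcal{N}_{\leq}$ into the right-hand side, exchange the order of summation, and collapse the inner sum to a Kronecker delta. Local finiteness guarantees that every interval $[x_0,x]$ is finite, so all sums below are finite and the rearrangements are legitimate in any Abelian group $A$. Here $\mu(y,x)\in\mathbb{Z}$ acts on $\mathcal{N}_{\leq}(y)\in A$ by the usual $\mathbb{Z}$-module structure.

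Fix $x\geq x_0$. Expanding, the right-hand side is
\begin{align*}
\sum_{y\in[x_0,x]}\mu(y,x)\sum_{z\in[x_0,y]}\mathcal{N}(z)
=\sum_{z\in[x_0,x]}\Bigl(\sum_{y\in[z,x]}\mu(y,x)\Bigr)\mathcal{N}(z).
\end{align*}
For the exchange of summation, note that the set of pairs with $x_0\leq z\leq y\leq x$ can be parametrized either by $y\in[x_0,x]$ and $z\in[x_0,y]$, or by $z\in[x_0,x]$ and $y\in[z,x]$; transitivity ensures both descriptions give the same set.

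The key step, and the only real obstacle, is that the definition as stated in the paper gives a sum over the \emph{first} argument, $\sum_{z\in[x,y]}\mu(x,z)=\delta(x,y)$. What is needed here is the dual identity $\sum_{y\in[z,x]}\mu(y,x)=\delta(z,x)$. I will derive it via the incidence algebra. Consider $\mathbb{Q}$-valued (or $\mathbb{Z}$-valued) functions on the intervals of the finite interval $[x_0,x]$, with convolution $(f*g)(a,b)=\sum_{c\in[a,b]}f(a,c)g(c,b)$.
\begin{itemize}
\item Let $\zeta(a,b)=1$ for $a\leq b$.
\item The defining relation says $\mu*\zeta=\delta$.
\item Order the finite set $[x_0,x]$ by a linear extension. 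Then $\mu$ and $\zeta$ become upper unitriangular matrices, and $\delta$ is the identity matrix.
\end{itemize}
A one-sided inverse of a square matrix is two-sided, so $\zeta*\mu=\delta$ as well, that is, $\sum_{c\in[a,b]}\mu(c,b)=\delta(a,b)$. This is exactly the dual identity. Alternatively, one can argue by induction on $\#[a,b]$, but the matrix argument is cleanest.

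Applying the dual identity to the inner sum kills every term except $z=x$, leaving $\mathcal{N}(x)$, which proves the lemma.
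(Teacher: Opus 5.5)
Your proof is correct and complete. The paper gives no proof of this lemma; it quotes it as the standard M\"obius inversion formula with a pointer to \cite[\S~1.2]{hou2018lectures}. So the only comparison is with the textbook argument, and yours is exactly that one: interchange the finite sums, then use that $\mu$ is a two-sided inverse of $\zeta$ in the incidence algebra of $[x_0,x]$.

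You are right that the paper's one-sided defining relation $\sum_{z\in[x,y]}\mu(x,z)=\delta(x,y)$ does not directly give the identity $\sum_{y\in[z,x]}\mu(y,x)=\delta(z,x)$ that your inner sum needs. The unitriangular-matrix argument supplies it cleanly. That argument implicitly uses two facts: $[a,b]\subseteq[x_0,x]$ for $a,b\in[x_0,x]$, and $\mu$ and $\zeta$ vanish off the order relation. Together these make convolution on $[x_0,x]$ literally matrix multiplication.

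One wording slip: the paper's relation sums over the \emph{second} argument of $\mu$ with the first fixed, and the identity you need sums over the first. Your text has this reversed, but your displayed formulas are the correct ones.
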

When it comes to the \textit{subspace lattice} $(\mathcal{L}(V),\subseteq)$, where $\mathcal{L}(V)$ denotes the set of all subspaces of an $n$-dimensional vector space $V$ over $\mathbb{F}_{q}$, the M{\"o}bius function on $(\mathcal{L}(V),\subseteq)$ is given by
\begin{align*}
	\mu(U,W):=(-1)^{\dim{W}-\dim{U}}q^{{\dim{W}-\dim{U}\choose 2}}
\end{align*}
for all subspaces $U,W$ of $V$ with $U\subseteq W$ (cf. \cite{bender1975applications} and \cite{rota1964foundations}).

\section{Eigenvalues of $q$-Steiner systems}
Recall that for proving Theorem \ref{thm_main}, we need to investigate the eigenvalues and eigenspaces of the adjacency matrices $A_{k-t},A_{k-t+1},...,A_{k}$ of the Grassmann scheme. Let $\mathcal{L}_{k}$ denote the set of $k$-dimensional subspaces of the projective space $PG(n,q)$ (resp., the corresponding Grassmann space). In particular, define $\mathcal{L}_{-1}=\left\{\emptyset\right\}$ and $\mathcal{L}_{-2}=\emptyset$. Identify the vector $(f_{1},...,f_{{n\brack k}})\in\mathbb{R}^{{n\brack k}}$ with the function $f:\mathcal{L}_{{n\brack k}}\to\mathbb{R},X_j\mapsto f_j$. We have the following theorem which determines the eigenspaces of each adjacency matrix $A_{i}$ ($0\leq i\leq k$).
\begin{theorem}[{\cite[Theorem 2.7]{eisfeld1999eigenspaces}}]\label{thm_eigenspace.of.grassmann.scheme}
	For $r\in\left\{0,1,...,\min(k+1,n-k)\right\}$, let $V_{r}$ be the $\mathbb{R}$-vector space spanned by  functions of the form
	\begin{align*}
		\begin{split}
			f:\mathcal{L}_{k}&\to\mathbb{R}\\L_{k}&\mapsto\sum_{L_{r-1}\subseteq L_{k}}g(L_{r-1}),
		\end{split}
	\end{align*}
	where $g:\mathcal{L}_{r-1}\to\mathbb{R}$ satisfies that $\sum_{L_{r-1}\supseteq L_{r-2}}g(L_{r-1})=0$ for all $L_{r-2}\in\mathcal{L}_{r-2}$. Then $V_{r}$ is the $r$-th eigenspace of $A_{i}$ to the eigenvalue
	\begin{align*}
		\nu_{r}^{(i)}:=\sum_{j=\max(0,r-i)}^{\min(r,k-i)}(-1)^{r-j}{r\brack j}{n-k+j-r\brack n-k-i}{k-j\brack i}q^{i(i+j-r)+\binom{r-j}{2}}.
	\end{align*}
	Furthermore, the spaces $V_{r}$, $r=0,1,...,\min(k,n-k)$ form the complete system of the eigenspaces of Grassmann scheme $G_{q}(n,k)$.
\end{theorem}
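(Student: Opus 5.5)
The plan is to show directly that every function in $V_{r}$ is an eigenvector of each $A_{i}$, and then to obtain completeness by counting dimensions. Fix $r$ and a function $g:\mathcal{L}_{r-1}\to\mathbb{R}$ satisfying the stated condition; call such a $g$ \emph{harmonic}. Let $f(X)=\sum_{L\subseteq X}g(L)$, where $L$ runs over $(r-1)$-dimensional subspaces (in projective dimension $r-1$, so vector dimension $r$; below I write $\dim$ for vector dimension). Exchanging the order of summation gives
\begin{align*}
(A_{i}f)(X)=\sum_{L\in\mathcal{L}_{r-1}}g(L)\,N_{i}(X,L),
\end{align*}
where $N_{i}(X,L)$ is the number of $k$-spaces $Y\supseteq L$ with $\dim(X\cap Y)=k-i$. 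By transitivity of $\mathrm{GL}_{n}(q)$ on pairs $(X,L)$ with a prescribed value of $\dim(X\cap L)$, the number $N_{i}(X,L)$ depends only on $j=\dim(X\cap L)$. I would compute it by a standard flag count: first choose $X\cap Y$ as a $(k-i)$-space of $X$ containing $X\cap L$, then extend. The result is a product of Gaussian binomials and a power of $q$.

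Hence $(A_{i}f)(X)=\sum_{j}N_{i,j}\,S_{j}(X)$, where $S_{j}(X)=\sum_{L:\dim(L\cap X)=j}g(L)$. The key lemma is that harmonicity forces each $S_{j}(X)$ to be a scalar multiple $c_{j}f(X)$. I would argue this in three steps.

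\begin{enumerate}
\item \emph{Propagation.} By double counting, for any subspace $M$ with $\dim M<\dim L$ the sum $\sum_{L\supseteq M}g(L)$ is a positive multiple of the sum over the codimension-one subspaces $M'\supseteq M$ of $L$. Each of those inner sums vanishes, so $\sum_{L\supseteq M}g(L)=0$.
\item \emph{Grouping.} Group the $L$ by $M=L\cap X$ and set $T(M)=\sum_{L\supseteq M}\sum_{P:\,M\subseteq P\subseteq X\cap L}(\ldots)$, organised over the subspaces $P$ of $X$. Write $F_{\le}(M)$ for the sum of $g(L)$ over $L$ with $L\cap X\supseteq M$; each such $L$ is determined by $L\cap X$ and a complement, and Möbius inversion on the interval $[M,X]$ (Lemma \ref{lemma_mobius.inversion}, with the subspace-lattice Möbius function) recovers the "exact intersection'' sums $S_{j}$ from the $F_{\le}$.
\item \emph{Reduction to $f(X)$.} The sums over $L$ containing a fixed subspace of $X$, with a prescribed intersection with $X$, reduce by step 1 to quantities of the form $\sum_{L\subseteq X,\,L\supseteq P}g(L)$. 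Summing over $P\subseteq X$ of a given dimension turns these into multiples of $f(X)$.
\end{enumerate}

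This yields $A_{i}f=\nu f$ with $\nu=\sum_{j}N_{i,j}c_{j}$. I then expect to simplify this double sum to the stated closed form $\nu_{r}^{(i)}$ using Proposition \ref{prop_kurihara.triterms} and Proposition \ref{prop_lv.wang.binomial}. The summation range $\max(0,r-i)\le j\le\min(r,k-i)$ should appear from the vanishing of the Gaussian binomials.

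For completeness, I would show that the map $g\mapsto f$ (inclusion from $(r-1)$-spaces into $k$-spaces) is injective on $\mathbb{R}^{\mathcal{L}_{r-1}}$ for $r\le\min(k,n-k)$, by Kantor's full-rank theorem for subspace inclusion matrices. The harmonic functions form the kernel of the inclusion map from $(r-1)$-spaces down to $(r-2)$-spaces. That map is surjective, again by Kantor's theorem, so the harmonic space has dimension ${n\brack r}-{n\brack r-1}$. These dimensions telescope to ${n\brack k}$, the total dimension. The $V_{r}$ are eigenspaces of the symmetric matrix $A_{1}$, and $A_{1}$ has pairwise distinct eigenvalues $\nu^{(1)}_{r}$, which can be checked from the closed form. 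Hence the $V_{r}$ are mutually orthogonal and together exhaust $\mathbb{R}^{{n\brack k}}$. This gives the complete system of eigenspaces of the Bose--Mesner algebra, which is generated by $A_{1}$.

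The main obstacle is step 2 of the key lemma together with the final simplification. Obtaining the precise constants $c_{j}$ from the Möbius inversion, and collapsing $\sum_{j}N_{i,j}c_{j}$ into exactly the single alternating sum with exponent $q^{i(i+j-r)+\binom{r-j}{2}}$, needs careful $q$-binomial bookkeeping. There I would lean on Proposition \ref{prop_kurihara.triterms}, whose second form already has that exponent shape.
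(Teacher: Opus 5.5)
Your proposal is correct, and it takes a different route from the paper only in the sense that the paper gives no proof here. The paper quotes Eisfeld's Theorem 2.7 and adds just one remark: the multiplicities follow from $\overline{V}_r=V_0\oplus\cdots\oplus V_r$ together with $\dim\overline{V}_r={n\brack r}$. Your argument is the natural self-contained proof, and essentially Eisfeld's: lift a harmonic $g$, split $A_if$ by $j=\dim(X\cap L)$, use that $\sum_{L\supseteq P}g(L)=0$ for $\dim P<r$, and invert.

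The step you flag as the main obstacle is in fact immediate, and the auxiliary $T(M)$ is unnecessary. Put $E(Q)=\sum_{L\cap X=Q}g(L)$ and $F_{\le}(P)=\sum_{L\supseteq P}g(L)$ for $P,Q\subseteq X$. Inversion over the upper interval $[Q,X]$ gives $E(Q)=\sum_{Q\subseteq P\subseteq X}\mu(Q,P)F_{\le}(P)$, and only the $r$-dimensional $P\subseteq X$ survive. Hence
\[
S_j(X)=(-1)^{r-j}q^{\binom{r-j}{2}}{r\brack j}f(X),\qquad\text{so}\qquad c_j=(-1)^{r-j}q^{\binom{r-j}{2}}{r\brack j}.
\]
Your flag count gives $N_{i,j}=q^{i(i+j-r)}{k-j\brack i}{n-k+j-r\brack n-k-i}$. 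To see this, choose $X\cap Y\supseteq X\cap L$ in ${k-j\brack i}$ ways, then apply Lemma~\ref{lemma_counting.subspace} in the quotient by $(X\cap Y)+L$, where $X$ has $i$-dimensional image. So $N_{i,j}c_j$ is literally the $j$-th summand of $\nu_r^{(i)}$. The summation range comes from the vanishing of these two Gaussian binomials, and Propositions~\ref{prop_kurihara.triterms} and~\ref{prop_lv.wang.binomial} are not needed.

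For completeness, your use of Kantor's theorem computes $\dim V_r={n\brack r}-{n\brack r-1}$ directly. Surjectivity of the $r$-to-$(r-1)$ inclusion map needs $2r\le n+1$, which holds. The $V_r$ are separated by $\nu_r^{(1)}=q[n-k][k-r]-[r][k-r+1]$, whose consecutive differences are $(q^{n-r}-q^{r})/(q-1)>0$ for $r<n/2$. The paper's remark reaches the same dimensions inductively through the filtration $\overline{V}_r$. Both routes rest on the same rank fact, $\dim\overline{V}_r={n\brack r}$.
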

It can be proved that the multiplicity of $\nu_{r}^{(i)}$ equals ${n\brack r}-{n\brack r-1}$. Actually, in the proof of Theorem \ref{thm_eigenspace.of.grassmann.scheme} by Eisfeld \cite{eisfeld1999eigenspaces}, it has been shown that $\overline{V}_{r}=V_{0}\oplus V_{1}\oplus\cdots\oplus V_{r}$, where $\overline{V}_{0}=V_{0}$ and $\overline{V}_{r}$ ($r\geq 1$) is the vector space spanned by the functions $f:\mathcal{L}_{k}\to\mathbb{R}$ of the form $f(L_{k})=\sum_{L_{r-1}\subseteq L_n}g(L_{r-1})$, $g:\mathcal{L}_{r-1}\to\mathbb{R}$. Hence the assertion follows directly by induction on $r$, given that $\dim{\overline{V}_{r}}={n\brack r}$.\par 
Notice that the eigenspace $V_{r}$ is independent from the choice of $i$, which yields that the adjacency matrices $A_{i}$ have the $r$-th eigenspace $V_{r}$ in common. Therefore, together with \eqref{eq_gram.matrix.decomposition} we see that the $r$-th eigenvalue of the Gram matrix $UU^{T}$ is exactly
\begin{align*}
	\mu_{r}=\kappa+\sum_{i=0}^{t} \kappa_{i}\nu_{r}^{(k-i)},~0\leq r\leq k,
\end{align*}
where each $\nu_{r}^{(k-i)}$ has multiplicity ${n\brack r}-{n\brack r-1}$.\par 
Furthermore, there is an equivalent expression of the $r$-th eigenvalue $\nu_{r}^{(i)}$ which was given by Delsarte \cite[Theorem 10]{delsarte1976association}, namely
\begin{align}\label{eq_eigenvalue.grassmann.scheme.delsarte}
	\nu_{r}^{(i)}=E_{i}(n,k;q;r)
\end{align}
with the so-called \textit{generalized Eberlein polynomial} (cf. \cite[Sect. 5.2]{delsarte1976properties})
\begin{align*}
	E_{i}(n,k;q;x):=\sum_{j=0}^{i} (-1)^j \qbin{x}{j}\qbin{k-x}{i-j}\qbin{n-k-x}{i-j} q^{\binom{j}{2}+(i-j)(i-j+x)}.
\end{align*}
Henceforth, we will adopt the expression \eqref{eq_eigenvalue.grassmann.scheme.delsarte} for the eigenvalues $\nu_{r}^{(i)}$ for the convenience of computation.
\subsection{Determining the coefficients $\kappa$ and $\kappa_{i}$}
Recall from \eqref{eq_definition.kappa} and \eqref{eq_definition.kappa.i} the definitions of $\kappa$ and $\kappa_{i}$, respectively. Also recall that the number of $q$-Steiner systems in $S_{q}(t,k,n)$, denoted by $N$, is given by Corollary \ref{coro_the.number.of.q-steiner.systems} and is non-zero.

\begin{proposition}\label{prop_kappa}
	We have
	\begin{align*}
		\kappa=N\dfrac{{n\brack t}}{{n\brack k}{k\brack t}}.
	\end{align*}
	\begin{proof}
		Consider the set $\Omega:=\left\{(\mathfrak{B},X)\mid \mathfrak{B}\in S_{q}(t,k,n),X\in\mathfrak{B}\right\}$. We obtain the value of $\kappa$ by counting $\#\Omega$ in two different ways. On the one hand, there are $N$ choices of Steiner system $\mathfrak{B}$ and there are exactly ${n\brack t}/{k\brack t}$ blocks in $\mathfrak{B}$, which is the number of choices of $X$. Hence $\#\Omega=N\cdot{n\brack t}/{k\brack t}$. On the other hand, since the number of $k$-dimensional subspaces of $V$ equals $n\brack k$, we have $\#\Omega={n\brack k}\cdot \kappa$. Thus
		\begin{align*}
			N\cdot{n\brack t}\big/{k\brack t}=\#\Omega={n\brack k}\cdot \kappa,
		\end{align*}
		which gives the result.
	\end{proof}
\end{proposition}
For computing $\kappa_{i}$, $i=0,1,...,t$, some auxiliary lemmas are needed.
\begin{lemma}\label{lemma_spanning.sets.counting}
	Let $m$ be a positive integer and $d$ be a nonnegative integer. For a $d$-dimensional subspace $V_{d}$ of a vector space $V$ over $\mathbb{F}_{q}$ we define
	\begin{align*}
		N_{q}(m,d)=N_{q}(m,V_{d}):=\#\left\{v_{1},...,v_{m}\in\text{PG}(V)\mid\text{span}\left\{v_{1},...,v_{m}\right\}=V_{d}\right\},
	\end{align*}
	where vectors $v_{1},...,v_{m}\in V$ lying in the projective space $\text{PG}(V)$, by abuse of language, means that $v_{1},...,v_{m}$ are nonzero and pairwise non-collinear. Then $N_{q}(m,d)=0$ whenever $1\leq m<d$, and $N_{q}(m,0)=0$ for any $m\geq 1$. In general,
	\begin{align}\label{eq_spanning.sets.counting.formula}
		N_{q}(m,d)=\sum_{j=0}^{d}{d\brack j}(-1)^{d-j}q^{{d-j\choose 2}}{[j]\choose m}.
	\end{align}
	\begin{proof}
		By the definition of $N_{q}(m,d)$, we have
		\begin{align*}
			\sum_{0\subseteq V'\subseteq V_d}N_{q}(m,V')={[d]\choose m}
		\end{align*}
		for any $m,d$ and $d$-dimensional subspace $V_{d}$, since by summing up the number of spanning sets with $m$ elements for all subspaces of $V_{d}$, we are choosing all subsets of $m$ pairwise non-collinear nonzero vectors in $V_{d}$. Hence by applying Lemma \ref{lemma_mobius.inversion}, one obtains
		\begin{align*}
			\begin{split}
				N_{q}(m,d)=&\sum_{0\subseteq V'\subseteq V_{d}}\mu(V',V_d)\sum_{0\subseteq V''\subseteq V'}N_{q}(m,V'')\\
				=&\sum_{j=0}^{d}{d\brack j}(-1)^{d-j}q^{{d-j\choose 2}}{[j]\choose m}.\qedhere
			\end{split}
		\end{align*}
	\end{proof}
\end{lemma}
\begin{remark}
	In \cite{bender1975applications}, it was shown that
	\begin{align*}
		\sum_{j=0}^{d}{d\brack j}(-1)^{d-j}q^{{d-j\choose 2}}(2^{[j]}-1)
	\end{align*}
	actually counts the number of spanning sets for projective spaces, which is precisely the sum \eqref{eq_spanning.sets.counting.formula} by letting $m$ run from $1$ to $[d]$.
\end{remark}
We also note the following well-known result with regards to the subspace counting.
\begin{lemma}[{\cite[Lemma 1]{braun2018q}}]\label{lemma_counting.subspace}
	Let $a,b,u\in\left\{ 0,1,...,n\right\}$ and let $B\in{V\brack b}$, where $V=\mathbb{F}_{q}^n$. For $A\in{V\brack a}$, we have
	\begin{align*}
		\#\left\{U\in{V\brack u}\mid U\cap B=A\right\}=q^{(b-a)(u-a)}{n-b\brack u-a}
	\end{align*}
	and
	\begin{align*}
		\#\left\{U\in{V\brack u}\mid \dim(U\cap B)=a\right\}=q^{(b-a)(u-a)}{b\brack a}{n-b\brack u-a}.
	\end{align*}
\end{lemma}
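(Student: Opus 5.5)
The plan is to reduce the first count to counting subspaces that meet a fixed subspace trivially, by passing to the quotient by $A$. The second count then follows by summing over $A$.

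First, every $U$ with $U\cap B=A$ contains $A$. Hence the map $U\mapsto U/A$ is a bijection between the set $\{U\in{V\brack u}\mid U\cap B=A\}$ and the set of $(u-a)$-dimensional subspaces $\overline U$ of $\overline V:=V/A$ with $\overline U\cap \overline B=0$, where $\overline B:=B/A$. Put $N:=n-a=\dim\overline V$, $w:=b-a=\dim\overline B$ and $m:=u-a$. This bijection uses the lattice isomorphism between the subspaces of $V$ containing $A$ and the subspaces of $V/A$, which carries intersections to intersections. So $(U\cap B)/A=\overline U\cap\overline B$, and $U\cap B=A$ holds exactly when $\overline U\cap\overline B=0$.

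To count such $\overline U$, I will count ordered bases $(v_1,\dots,v_m)$ of $\overline U$ with $\operatorname{span}\{v_1,\dots,v_m\}\cap\overline B=0$. This condition is equivalent to $v_i\notin \overline B+\operatorname{span}\{v_1,\dots,v_{i-1}\}$ for each $i$. By induction, that subspace has dimension $w+i-1$, so there are $q^{N}-q^{w+i-1}$ choices for $v_i$. Each admissible $\overline U$ arises from exactly $|\mathrm{GL}_m(\mathbb{F}_q)|=\prod_{i=1}^m(q^m-q^{i-1})$ ordered bases. Factoring out the powers of $q$ gives
\begin{align*}
\prod_{i=1}^{m}\frac{q^{N}-q^{w+i-1}}{q^{m}-q^{i-1}}=\prod_{i=1}^{m}\frac{q^{w}\,(q^{N-w-i+1}-1)}{q^{m-i+1}-1}=q^{wm}{N-w\brack m}.
\end{align*}
Since $N-w=n-b$, this equals $q^{(b-a)(u-a)}{n-b\brack u-a}$, which is the first formula. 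The degenerate cases take care of themselves. If $u<a$ no $U$ contains $A$, and if $u-a>n-b$ the Gaussian binomial vanishes; both are consistent with the convention ${n\brack k}=0$ for $k<0$.

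For the second formula, I partition $\{U\in{V\brack u}\mid \dim(U\cap B)=a\}$ according to the subspace $A=U\cap B\in{B\brack a}$. There are ${b\brack a}$ choices of $A$. Each class has the size computed above, which does not depend on $A$. Multiplying gives $q^{(b-a)(u-a)}{b\brack a}{n-b\brack u-a}$.

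None of these steps is a real obstacle. The only points that need care are the correspondence under the quotient map and the index bookkeeping in the product.
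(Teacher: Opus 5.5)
Your proof is correct. The paper gives no proof of this lemma; it simply quotes it from \cite{braun2018q}, so there is no argument of the paper's to compare with. Your route is the standard self-contained derivation: pass to $V/A$, count the ordered $m$-tuples whose span meets $\overline{B}$ trivially, and divide by $|\mathrm{GL}_m(\mathbb{F}_q)|$. The product computation $\prod_{i=1}^{m} q^{w}(q^{N-w-i+1}-1)/(q^{m-i+1}-1)=q^{wm}\qbin{N-w}{m}$ is right. The degenerate cases also come out correctly. When $u-a>n-b$, the factor with $i=N-w+1\le m$ is $q^N-q^N=0$, which matches $\qbin{n-b}{u-a}=0$.

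You should make one hypothesis explicit. As printed, the statement takes $A\in{V\brack a}$, but the first formula holds only when $A\subseteq B$, that is, $A\in{B\brack a}$. If $A\not\subseteq B$, the left-hand side is $0$ while the right-hand side need not be. Your argument silently assumes $A\subseteq B$ in two places: when you form $\overline{B}=B/A$, and when you identify $(U\cap B)/A$ with $\overline{U}\cap\overline{B}$. State this assumption at the start. Your second part already sums over $A\in{B\brack a}$, so it is unaffected.
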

\begin{lemma}\label{lemma_intersect.counting}
	For any $\mathfrak{B}\in S_{q}(t,k,n)$, let $X$ be a block in $\mathfrak{B}$ and $I\in{X\brack i}$, where $i\in\left\{0,1,...,t-1\right\}$. Then
	\begin{multline*}
		\#\left\{ Y\in\mathfrak{B}\mid Y\neq X,X\cap Y=I\right\}=\\\frac{1}{{n-t\brack k-t}}\sum_{j=i}^{t}{k-i\brack j-i}{n-j\brack k-j}(-1)^{j-i}q^{{j-i\choose 2}}+(-1)^{t+1-i}{k-i-1\brack t-i}q^{{t+1-i\choose 2}}.
	\end{multline*}
	\begin{proof}
		Let $\hat{I}$ be an $(i+1)$-dimensional subspace of $X$ containing $I$, then it is clear that there are precisely $[k-i]$ choices of $\hat{I}$. Let $\hat{I}_{1},...,\hat{I}_{[k-i]}$ be all such spaces. It follows that
		\begin{align}\label{eqs_inclusion.exclusion}
			\begin{split}
				&\#\left\{Y\in\mathfrak{B}\mid Y\neq X,X\cap Y=I\right\}\\
				=&\#\left(\left\{ Y\in\mathfrak{B}\mid Y\neq X,X\cap Y\supseteq I\right\}\big\backslash\bigcup_{l=1}^{[k-i]}\left\{ Y\in\mathfrak{B}\mid Y\neq X,X\cap Y\supseteq\hat{I}_l\right\}\right)\\
				=&\#\left\{ Y\in\mathfrak{B}\mid Y\neq X,X\cap Y\supseteq I\right\}\\
				&\qquad\quad-\sum_{l=1}^{[k-i]}(-1)^{l+1}\sum_{1\leq i_{1}<\cdots<i_l\leq[k-i]}\#\bigcap_{j=1}^l\left\{ Y\in\mathfrak{B}\mid Y\neq X,X\cap Y\supseteq\hat{I}_{i_j}\right\}\\
				=&(\lambda_{i}-1)-\sum_{l=1}^{[k-i]}(-1)^{l+1}\sum_{1\leq i_{1}<\cdots<i_l\leq[k-i]}\#\left\{ Y\in\mathfrak{B}\mid Y\neq X,X\cap Y\supseteq\sum_{j=1}^l\hat{I}_{i_j}\right\}.
			\end{split}
		\end{align}
		Let $\overline{v_{1}},...,\overline{v_{[k-i]}}$ be the generators of all $1$-dimensional subspaces of $X/I$, where
		\begin{align*}
			\overline{\left(\cdot\right)}:X\to X/I
		\end{align*}
		denotes the canonical map. Then the space $\sum_{j=1}^{l}\hat{I}_{i_{j}}$ defined for each $1\leq l\leq[k-i]$ is precisely $\text{span}_{\mathbb{F}_{q}}\left\{w,v_{i_{j}}\mid w\in I, 1\leq j\leq l\right\}$, and choosing the spanning sets of $\sum_{j=1}^{l}\hat{I}_{i_j}$ is therefore equivalent to choosing the spanning sets $\left\{\overline{v_{a}}\mid a\in S\right\}$ of $\sum_{j=1}^{l}\hat{I}_{i_j}/I$ with $\emptyset\neq S\subseteq\left\{1,2,...,[k-i]\right\}$, that is to select vectors from $\overline{v_{1}},...,\overline{v_{[k-i]}}$.\par 
		Note that once $\dim\sum_{j=1}^{l}\hat{I}_{i_{j}}/I=d$, we have
		\begin{align*}
			\#\left\{ Y\in\mathfrak{B}\mid Y\neq X,X\cap Y\supseteq\sum_{j=1}^{l}\hat{I}_{i_{j}}\right\}=\lambda_{d+i}-1,
		\end{align*}
		where $\lambda_{d+i}={n-d-i\brack t-d-i}/{k-d-i\brack t-d-i}$ with $1\leq d\leq t-i$, while
		\begin{align*}
			\left\{ Y\in\mathfrak{B}\mid Y\neq X,X\cap Y\supseteq\sum_{j=1}^{l}\hat{I}_{i_{j}}\right\}=\emptyset
		\end{align*}
		for $d\geq t-i$ since $\mathfrak{B}\in S_{q}(t,k,n)$ is a $q$-Steiner system, in which case we define $\lambda_{d+i}=1$. Therefore
		\begin{align}\label{eq_subset.counting}
			\begin{split}
				&\sum_{l=1}^{[k-i]}(-1)^{l}\sum_{1\leq i_{1}<\cdots<i_l\leq[k-i]}\#\left\{ Y\in\mathfrak{B}\mid Y\neq X,X\cap Y\supseteq\sum_{j=1}^{l}\hat{I}_{i_{j}}\right\}\\
				=&\sum_{\emptyset\neq S\subseteq\left\{ 1,2,...,[k-i]\right\}}(-1)^{|S|}(\lambda_{d_{|S|}+i}-1),
			\end{split}
		\end{align}
		where $d_{|S|}=\dim\text{span}_{\mathbb{F}_{q}}\left\{v_{a}\mid a\in S\right\}$. If we rearrange the right-hand side of \eqref{eq_subset.counting} by the dimension of subspaces that the vectors $\overline{v_{a}}$ ($a\in S$) span, together with \eqref{eqs_inclusion.exclusion} we obtain
		\begin{align*}
			\begin{split}
				&\#\left\{Y\in\mathfrak{B}\mid Y\neq X,X\cap Y=I\right\}\\
				=&(\lambda_{i}-1)-\sum_{l=1}^{[k-i]}(-1)^{l+1}\sum_{1\leq i_{1}<\cdots<i_l\leq[k-i]}\#\left\{Y\in\mathfrak{B}\mid Y\neq X,X\cap Y\supseteq\sum_{j=1}^l\hat{I}_{i_j}\right\}\\
				=&(\lambda_{i}-1)+\sum_{\emptyset\neq S\subseteq\left\{ 1,2,...,[k-i]\right\}}(-1)^{|S|}(\lambda_{d_{|S|}+i}-1)\\
				=&(\lambda_{i}-1)+\sum_{d=1}^{k-i}{k-i\brack d}(\lambda_{d+i}-1)\sum_{m=1}^{[d]}(-1)^mN_{q}(m,d)\\
				=&(\lambda_{i}-1)+\sum_{d=1}^{t-i}{k-i\brack d}(\lambda_{d+i}-1)\sum_{m=1}^{[d]}(-1)^mN_{q}(m,d),
			\end{split}
		\end{align*}
		where $N_{q}(m,d)=N_{q}(m,\overline{V_{d}})=\#\left\{\overline{v_{1}},...,\overline{v_m}\in\text{PG}(X/I)\mid\text{span}\left\{\overline{v_{1}},...,\overline{v_{m}}\right\}=\overline{V_{d}}\right\}$, $\overline{V_{d}}\in{X/I\brack d}$. Thus, by applying Lemma \ref{lemma_spanning.sets.counting} we have
		\begin{align}\label{eqs_binomial.reduction}
			\begin{split}
				&\#\left\{Y\in\mathfrak{B}\mid Y\neq X,X\cap Y=I\right\}\\
				=&(\lambda_{i}-1)+\sum_{d=1}^{t-i}{k-i\brack d}(\lambda_{d+i}-1)\sum_{m=1}^{[d]}(-1)^m\sum_{j=0}^{d}{d\brack j}(-1)^{d-j}q^{{d-j\choose 2}}{[j]\choose m}\\
				=&(\lambda_{i}-1)+\sum_{d=1}^{t-i}{k-i\brack d}(\lambda_{d+i}-1)\sum_{j=1}^{d}{d\brack j}(-1)^{d-j}q^{{d-j\choose 2}}\sum_{m=1}^{[d]}(-1)^{m}{[j]\choose m}\\
				=&(\lambda_{i}-1)-\sum_{d=1}^{t-i}{k-i\brack d}(\lambda_{d+i}-1)\sum_{j=1}^{d}{d\brack j}(-1)^{d-j}q^{{d-j\choose 2}}.
			\end{split}
		\end{align}
		Notice that for each $0\leq d\leq t-i$,
		\begin{align*}
			\lambda_{d+i}={n-d-i\brack t-d-i}\big/{k-d-i\brack t-d-i}={n-d-i\brack k-d-i}\big/{n-t\brack k-t}.
		\end{align*}
		This gives rise to
		\begin{align}\label{eq_counting.reduction.lambda}
			\begin{split}
				\sum_{d=1}^{t-i}{k-i\brack d}\lambda_{d+i}\sum_{j=1}^{d}{d\brack j}(-1)^{d-j}q^{{d-j\choose 2}}&=\sum_{d=1}^{t-i}{k-i\brack d}\lambda_{d+i}\left(-(-1)^{d}q^{{d\choose 2}}\right)\\
				&=\frac{1}{{n-t\brack k-t}}\sum_{d=1}^{t-i}{k-i\brack d}{n-d-i\brack k-d-i}(-1)^{d+1}q^{{d\choose 2}},
			\end{split}
		\end{align}
		where the first equality follows from Proposition \ref{prop_kurihara.triterms} by taking $(x,y,h,p)=(d,d,0,d)$. Similarly, we have
		\begin{align}\label{eqs_counting.reduction.1}
			\begin{split}
				\sum_{d=1}^{t-i}{k-i\brack d}\sum_{j=1}^{d}{d\brack j}(-1)^{d-j}q^{{d-j\choose 2}}=&\sum_{d=1}^{t-i}{k-i\brack d}(-1)^{d+1}q^{{d\choose 2}}\\
				=&-{t-k\brack t-i}q^{(t-i)(k-i)}+1\\
				=&-(-1)^{t-i}{k-i-1\brack t-i}q^{(t-k)(t-i)-{t-i\choose 2}+(t-i)(k-i)}+1\\
				=&(-1)^{t+1-i}{k-i-1\brack t-i}q^{{t+1-i\choose 2}}+1.
			\end{split}
		\end{align}
		Thus, by substituting \eqref{eq_counting.reduction.lambda} and \eqref{eqs_counting.reduction.1} into \eqref{eqs_binomial.reduction}, we obtain
		\begin{align*}
			\begin{split}
				&\#\left\{Y\in\mathfrak{B}\mid Y\neq X,X\cap Y=I\right\}\\
				=&(\lambda_{i}-1)-\frac{1}{{n-t\brack k-t}}\sum_{d=1}^{t-i}{k-i\brack d}{n-d-i\brack k-d-i}(-1)^{d+1}q^{{d\choose 2}}+(-1)^{t+1-i}{k-i-1\brack t-i}q^{{t+1-i\choose 2}}+1\\
				=&\frac{1}{{n-t\brack k-t}}\sum_{d=0}^{t-i}{k-i\brack d}{n-d-i\brack k-d-i}(-1)^{d}q^{{d\choose 2}}+(-1)^{t+1-i}{k-i-1\brack t-i}q^{{t+1-i\choose 2}}\\
				=&\frac{1}{{n-t\brack k-t}}\sum_{j=i}^{t}{k-i\brack j-i}{n-j\brack k-j}(-1)^{j-i}q^{{j-i\choose 2}}+(-1)^{t+1-i}{k-i-1\brack t-i}q^{{t+1-i\choose 2}}.\qedhere
			\end{split}
		\end{align*}
	\end{proof}
\end{lemma}
Now we are ready to compute $\kappa_{i}=\#\left\{\mathfrak{B}\in S_{q}(t,k,n)\mid X\in\mathfrak{B},Y\in\mathfrak{B}\right\}$ for distinct $X,Y\in\text{Gr}_{n,k}(\mathbb{F}_{q})$ with $\dim(X\cap Y)=i$.
\begin{proposition}\label{prop_kappa.i}
	We have
	\begin{align*}
		\kappa_{i}=\frac{N}{{n-t\brack k-t}{n-k\brack k-i}q^{(k-i)^2}}\left(\frac{1}{{n-t\brack k-t}}\sum_{j=i}^{t}{k-i\brack j-i}{n-j\brack k-j}(-1)^{j-i}q^{{j-i\choose 2}}+(-1)^{t+1-i}{k-i-1\brack t-i}q^{{t+1-i\choose 2}}\right).
	\end{align*}
	\begin{proof}
		Consider the set $\Omega_{i}:=\left\{(\mathfrak{B},X,Y)\mid \mathfrak{B}\in S_{q}(t,k,n),X\neq Y,X\in\mathfrak{B},Y\in\mathfrak{B},\dim(X\cap Y)=i\right\}$. We count the cardinality of this set in two different ways to obtain $\kappa_{i}$ for any $i\in\left\{0,1,...,t-1\right\}$.\par 
		On the one hand, we first choose the $i$-dimensional subspace $I$ (actually $X\cap Y$), for which we have $n\brack i$ choices. Then by Lemma \ref{lemma_counting.subspace}, the number of $X\in \text{Gr}_{n,k}(\mathbb{F}_{q})$ containing $I$ equals ${n-i\brack k-i}$, and the number of $Y\in \text{Gr}_{n,k}(\mathbb{F}_{q})$ satisfying $Y\cap X=I$ is precisely ${n-k\brack k-i}q^{(k-i)^2}$. Hence
		\begin{align}\label{eq_kappa.i.counting.1}
			\#\Omega_{i}={n\brack i}{n-i\brack k-i}{n-k\brack k-i}q^{(k-i)^2}\cdot\kappa_{i}.
		\end{align}\par 
		On the other hand, first note there are $N$ choices of a Steiner system $\mathfrak{B}$ and there are ${n\brack t}/{k\brack t}$ choices of $X$ as a block in $\mathfrak{B}$. We may thereby choose $I$ (again, actually $X\cap Y$) as an $i$-dimensional subspace of $X$ provided with $k\brack i$ choices. Thus by Lemma \ref{lemma_intersect.counting},
		\begin{align}\label{eq_kappa.i.counting.2}
			\#\Omega_{i}=N\cdot\frac{{n\brack t}{k\brack i}}{{k\brack t}}\cdot\left(\frac{1}{{n-t\brack k-t}}\sum_{j=i}^{t}{k-i\brack j-i}{n-j\brack k-j}(-1)^{j-i}q^{{j-i\choose 2}}+(-1)^{t+1-i}{k-i-1\brack t-i}q^{{t+1-i\choose 2}}\right).
		\end{align}
		Comparing \eqref{eq_kappa.i.counting.1} with \eqref{eq_kappa.i.counting.2}, we have
		\begin{multline*}
			\kappa_{i}=N\cdot\frac{{n\brack t}{k\brack i}}{{k\brack t}{n\brack i}{n-i\brack k-i}{n-k\brack k-i}q^{(k-i)^2}}\cdot\Bigg(\frac{1}{{n-t\brack k-t}}\sum_{j=i}^{t}{k-i\brack j-i}{n-j\brack k-j}(-1)^{j-i}q^{{j-i\choose 2}}\\+(-1)^{t+1-i}{k-i-1\brack t-i}q^{{t+1-i\choose 2}}\Bigg).
		\end{multline*}
		Note that ${n\brack t}{n-t\brack k-t}={k\brack t}{n\brack k}$ and ${k\brack i}{n\brack k}={n\brack i}{n-i\brack k-i}$, from which the result follows immediately.
	\end{proof}
\end{proposition}
\subsection{Derivation of the eigenvalues}
We first derive some identities required to compute the eigenvalues of $UU^{T}$.
\begin{lemma}\label{lem:identity1}
	For integers $n,r,k,u,i$ with $r\leq n+1$, we have
	\begin{align*}
		\sum_{s=0}^u (-1)^s q^{\binom{u-s}{2}}\frac{\qbin{n-r+1}{u-s}\qbin{k-i+s}{s}^2}{\qbin{r-i+s}{s}}
		=
		q^{u(2k-i-r+1)}\sum_{s=0}^u (-1)^s q^{\binom{u-s}{2}+s(2r-2k+s-1)}\frac{\qbin{n-2k+i}{u-s}\qbin{k-r}{s}^2}{\qbin{r-i+s}{s}}.
	\end{align*}
	\begin{proof}
		Let $x=n-r+1$ and $L$ denote the left-hand side of the required identity. By using \eqref{eq:qbin_only_poch}, \eqref{eq:poch_diff}, and \eqref{eq:poch_sum}, we have
		\begin{align}\label{eq:qbin_xus}
			\qbin{x}{u-s}=(-1)^s q^{us-\binom{s}{2}}\frac{\pochq{q}{x}\pochq{q^{-u}}{s}}{\pochq{q}{u}\pochq{q}{x-u}\pochq{q^{x-u+1}}{s}}.
		\end{align}
		From \eqref{eq:qbin_only_poch_sum}, we find
		\begin{align}\label{eq:qbin_ktss_rtss}
			\qbin{k-i+s}{s}=\frac{\pochq{q^{k-i+1}}{s}}{\pochq{q}{s}},\quad \qbin{r-i+s}{s}=\frac{\pochq{q^{r-i+1}}{s}}{\pochq{q}{s}}.
		\end{align}
		Substituting \eqref{eq:qbin_xus} and \eqref{eq:qbin_ktss_rtss} into $L$ gives
		\begin{align*}
			L=\sum_{s=0}^u q^{\binom{u-s}{2}+us-\binom{s}{2}}\frac{\pochq{q}{x}\pochq{q^{-u}}{s}\pochq{q^{k-i+1}}{s}^2}{\pochq{q}{u}\pochq{q}{x-u}\pochq{q^{x-u+1}}{s}\pochq{q^{r-i+1}}{s}\pochq{q}{s}}.
		\end{align*}
		Using \eqref{eq:qbin_only_poch} and rewriting the exponent of the power of $q$ gives
		\begin{align*}
			L=q^{\binom{u}{2}}\qbin{x}{u}\sum_{s=0}^u q^s\frac{\pochq{q^{-u}}{s}\pochq{q^{k-i+1}}{s}^2}{\pochq{q^{x-u+1}}{s}\pochq{q^{r-i+1}}{s}\pochq{q}{s}}.
		\end{align*}
		This becomes
		\begin{align*}
			L=q^{\binom{u}{2}}\qbin{n-r+1}{u}\hyp{q^{-u}, q^{k-i+1}, q^{k-i+1}}{q^{n-r-u+2}, q^{r-i+1}}{q;q}.
		\end{align*}
		By setting $n=u$, $a=b=q^{k-i+1}$, $c=q^{r-i+1}$, $d=q^{n-r-u+2}$ in \eqref{eq:transf32}, we obtain
		\begin{align*}
			L=q^{\binom{u}{2}+u(2k-i-r+1)}\qbin{n-r+1}{u}
			\frac{\pochq{q^{n-2k+i-u+1}}{u}}{\pochq{q^{n-r-u+2}}{u}} \hyp{q^{-u}, q^{r-k}, q^{r-k}}{q^{r-i+1}, q^{n-2k+i-u+1}}{q;q}.
		\end{align*}
		From \eqref{eq:qbin_only_poch_sum}, we find
		\begin{align*}
			L=q^{\binom{u}{2}+u(2k-i-r+1)}\qbin{n-2k+i}{u}
			\hyp{q^{-u}, q^{r-k}, q^{r-k}}{q^{r-i+1}, q^{n-2k+i-u+1}}{q;q}.
		\end{align*}
		We thus have
		\begin{align*}
			L=q^{\binom{u}{2}+u(2k-i-r+1)}\qbin{n-2k+i}{u}
			\sum_{s\geq 0} q^s \frac{\pochq{q^{-u}}{s}\pochq{q^{r-k}}{s}^2}{\pochq{q^{r-i+1}}{s}\pochq{q^{n-2k+i-u+1}}{s}\pochq{q}{s}}.
		\end{align*}
		Applying \eqref{eq:qbin_only_poch_sum} and \eqref{eq:qbin_poch} implies
		\begin{align*}
			L=q^{\binom{u}{2}+u(2k-i-r+1)}\qbin{n-2k+i}{u}
			\sum_{s\geq 0}
			(-1)^s q^{3\binom{s}{2}-us-2s(k-r)+s}
			\frac{\qbin{u}{s}\qbin{k-r}{s}^2}{\qbin{r-i+s}{s}\qbin{n-2k+i-u+s}{s}}.
		\end{align*}
		Since $\qbin{n-2k+i}{u}\qbin{u}{s}=\qbin{n-2k+i}{s}\qbin{n-2k+i-s}{u-s}=\qbin{n-2k+i}{u-s}\qbin{n-2k+i-u+s}{s}$, we obtain
		\begin{align*}
			L=q^{\binom{u}{2}+u(2k-i-r+1)}
			\sum_{s\geq 0}
			(-1)^s q^{3\binom{s}{2}-us-2s(k-r)+s}
			\frac{\qbin{n-2k+i}{u-s}\qbin{k-r}{s}^2}{\qbin{r-i+s}{s}}.
		\end{align*}
		Some standard manipulations give the required identity.
	\end{proof}
\end{lemma}
\begin{lemma}\label{lem:identity2}
	For integers $n,r,k,i$ with $r\leq n+1$, we have
	\begin{multline*}
		\qbin{n-r+1}{i}\sum_{s=0}^i (-1)^s q^{\binom{i-s}{2}}
		\frac{\qbin{r}{i-s}\qbin{k-i+s}{s}^2}{\qbin{n-r-i+s+1}{s}}
		=\\
		q^{i(2k-r-i+1)}\qbin{n-2k+i}{i}\sum_{s=0}^i (-1)^s q^{\binom{i-s}{2}+s(2r-2k+s-1)}
		\frac{\qbin{r}{i-s}\qbin{k-r}{s}^2}{\qbin{n-2k+s}{s}}.
	\end{multline*}
	\begin{proof}
		The required identity follows by setting $u=i$ in Lemma \ref{lem:identity1} and using that for all $s$ with $0\leq s\leq i$, we have 
		\begin{equation*}
			\frac{\qbin{n-r+1}{i-s}\qbin{r}{i}}{\qbin{r-i+s}{s}}=\frac{\qbin{n-r+1}{i}\qbin{r}{i-s}}{\qbin{n-r-i+s+1}{s}}\quad\text{and}\quad\frac{\qbin{n-2k+i}{i-s}\qbin{r}{i}}{\qbin{r-i+s}{s}}=\frac{\qbin{n-2k+i}{i}\qbin{r}{i-s}}{\qbin{n-2k+s}{s}}.\qedhere
		\end{equation*}
	\end{proof}
\end{lemma}
\begin{lemma}\label{lem:identity3}
	For integers $n,r,k,t$ with $t\geq 0$ and $r\leq n+1$, we have
	\begin{align*}
		\sum_{i=0}^t \sum_{s=0}^i (-1)^s q^{i(2k-t-r+1)+\binom{s}{2}+s(2r-2k+s-i)}\frac{\qbin{n-2k+i}{i}\qbin{k-i}{t-i}\qbin{r}{i-s}\qbin{k-r}{s}^2}{\qbin{k}{i}\qbin{n-2k+s}{s}}
		=\frac{\qbin{r}{t}\qbin{n-r+1}{t}}{\qbin{k}{t}}.
	\end{align*}
	\begin{proof}
		Let $L$ denote the left-hand side of the required identity. We have
		\begin{align*}
			L=\sum_{i=0}^t q^{i(2k-t-r+1)-\binom{i}{2}}\frac{\qbin{n-2k+i}{i}\qbin{k-i}{t-i}}{\qbin{k}{i}}
			\sum_{s=0}^i (-1)^s q^{\binom{i-s}{2}+s(2r-2k+s-1)}\frac{\qbin{r}{i-s}\qbin{k-r}{s}^2}{\qbin{n-2k+s}{s}}.
		\end{align*}
		Applying Lemma~\ref{lem:identity2} gives
		\begin{align*}
			L
			&=\sum_{i=0}^t \sum_{s=0}^i (-1)^s q^{i(i-t)-\binom{i}{2}+\binom{i-s}{2}}
			\frac{\qbin{k-i}{t-i}\qbin{n-r+1}{i}\qbin{r}{i-s}\qbin{k-i+s}{s}^2}{\qbin{k}{i}\qbin{n-r-i+s+1}{s}}.
		\end{align*}
		Set $j=i-s$ to obtain
		\begin{align*}
			L
			&=\sum_{i=0}^t \sum_{j=0}^i (-1)^{i-j} q^{i(i-t)-\binom{i}{2}+\binom{j}{2}}
			\frac{\qbin{k-i}{t-i}\qbin{n-r+1}{i}\qbin{r}{j}\qbin{k-j}{i-j}^2}{\qbin{k}{i}\qbin{n-r-j+1}{i-j}}.
		\end{align*}
		We have
		\begin{align*}
			\frac{\qbin{n-r+1}{i}\qbin{k-j}{i-j}^2}{\qbin{k}{i}\qbin{n-r-j+1}{i-j}}=\frac{\qbin{n-r+1}{j}\qbin{k-j}{i-j}}{\qbin{k}{j}}.
		\end{align*}
		We thus obtain
		\begin{align*}
			L
			&=\sum_{i=0}^t \sum_{j=0}^i (-1)^{i-j} q^{i(i-t)-\binom{i}{2}+\binom{j}{2}}
			\frac{\qbin{r}{j}\qbin{n-r+1}{j}\qbin{k-j}{i-j}\qbin{k-i}{t-i}}{\qbin{k}{j}}.
		\end{align*}
		By setting $m=i-j$, we have
		\begin{align*}
			L
			&=\sum_{j=0}^t\sum_{m=0}^{t-j}(-1)^m q^{(m+j)(m+j-t)-\binom{m+j}{2}+\binom{j}{2}}
			\frac{\qbin{r}{j}\qbin{n-r+1}{j}}{\qbin{k}{j}}\qbin{k-j}{m}\qbin{k-j-m}{t-j-m}.
		\end{align*}
		By using $\qbin{k-j}{m}\qbin{k-j-m}{t-j-m}=\qbin{k-j}{k-t}\qbin{t-j}{m}$ and rewriting the exponent in the power of $q$, we obtain
		\begin{align*}
			L
			&=\sum_{j=0}^t q^{j(j-t)}\frac{\qbin{r}{j}\qbin{n-r+1}{j}\qbin{k-j}{k-t}}{\qbin{k}{j}}
			\sum_{m=0}^{t-j}(-1)^m q^{\binom{m}{2}+m(j-t+1)}\qbin{t-j}{m}.
		\end{align*}
		Applying \eqref{eq:qbinomialthm} gives
		\[
		\sum_{m=0}^{t-j}(-1)^m q^{\binom{m}{2}+m(j-t+1)}\qbin{t-j}{m}
		=\prod_{m=0}^{t-j-1}(1-q^{j-t+1+m})
		=\begin{cases}
			0&\text{if $t-j-1\geq 0$}\\
			1&\text{if $t=j$}.
		\end{cases}
		\]
		Therefore, we obtain
		\begin{align*}
			L=\frac{\qbin{r}{t}\qbin{n-r+1}{t}}{\qbin{k}{t}},
		\end{align*}
		as required.
	\end{proof}
\end{lemma}
\begin{proposition}\label{prop:threesums1}
	For integers $n,k,r,t$ with $t\geq 0$ and $r\leq n+1$, we have
	\begin{multline*}
		\sum_{i=0}^t \sum_{j=0}^{t-i} \sum_{s=0}^i (-1)^{i+j+s}q^{-(k-i)^2+\binom{j}{2}+\binom{s+r-i}{2}+(k-s-r)(k-s)}
		\frac{\qbin{n-2k+i}{i}\qbin{k-i}{j}\qbin{n-i-j}{t-i-j}\qbin{r}{i-s}\qbin{k-r}{s}^2}{\qbin{k}{i}\qbin{k-i-j}{t-i-j}\qbin{n-2k+s}{s}}\\
		=(-1)^t q^{\binom{r}{2}-kr+\binom{t+1}{2}}\frac{\qbin{r-1}{t}\qbin{n-r}{t}}{\qbin{k}{t}}.
	\end{multline*}
	\begin{proof}
		We denote the left-hand side of the required identity by $L(t)$ and its summand by $a(t,i,j,s)$, so that we have
		\[
		L(t)=\sum_{i=0}^t \sum_{j=0}^{t-i} \sum_{s=0}^i a(t,i,j,s).
		\]
		We then obtain
		\begin{align*}
			L(t+1)
			&=\sum_{i=0}^{t+1} \sum_{j=0}^{t+1-i} \sum_{s=0}^i a(t+1,i,j,s)\\
			&=\sum_{i=0}^t \sum_{j=0}^{t-i} \sum_{s=0}^i a(t+1,i,j,s)+\sum_{i=0}^{t+1} \sum_{s=0}^i a(t+1,i,t+1-i,s).
		\end{align*}
		Using $\qbin{n}{k}=\frac{\qnum{n}!}{\qnum{k}!\qnum{n-k}!}$ gives
		\[
		a(t+1,i,j,s)
		=a(t,i,j,s)\frac{\qbin{n-i-j}{t+1-i-j}\qbin{k-i-j}{t-i-j}}{\qbin{k-i-j}{t+1-i-j}\qbin{n-i-j}{t-i-j}}
		=a(t,i,j,s)\frac{\qnum{n-t}}{\qnum{k-t}}.
		\]
		Therefore, we have
		\begin{align*}
			L(t+1)
			&=\frac{\qnum{n-t}}{\qnum{k-t}}L(t)+\sum_{i=0}^{t+1} \sum_{s=0}^i a(t+1,i,t+1-i,s).
		\end{align*}
		It holds that
		\begin{align*}
			&\sum_{i=0}^{t+1} \sum_{s=0}^i a(t+1,i,t+1-i,s)\\
			=&(-1)^{t+1}\sum_{i=0}^{t+1} \sum_{s=0}^i (-1)^s q^{-(k-i)^2+\binom{t+1-i}{2}+\binom{s+r-i}{2}+(k-s-r)(k-s)}
			\frac{\qbin{n-2k+i}{i}\qbin{k-i}{t+1-i}\qbin{r}{i-s}\qbin{k-r}{s}^2}{\qbin{k}{i}\qbin{n-2k+s}{s}}\\
			=&(-1)^{t+1}q^{\binom{r}{2}-rk+\binom{t+1}{2}}\sum_{i=0}^{t+1} \sum_{s=0}^i (-1)^s q^{i(2k-t-r)+\binom{s}{2}+s(2r-2k+s-i)}
			\frac{\qbin{n-2k+i}{i}\qbin{k-i}{t+1-i}\qbin{r}{i-s}\qbin{k-r}{s}^2}{\qbin{k}{i}\qbin{n-2k+s}{s}}.
		\end{align*}
		By Lemma~\ref{lem:identity3}, we then obtain 
		\begin{align*}
			\sum_{i=0}^{t+1} \sum_{s=0}^i a(t+1,i,t+1-i,s)
			=(-1)^{t+1}q^{\binom{r}{2}-rk+\binom{t+1}{2}}\frac{\qbin{r}{t+1}\qbin{n-r+1}{t+1}}{\qbin{k}{t+1}}.
		\end{align*}
		Hence, we have
		\begin{align}\label{eq:expressionft1}
			L(t+1)
			&=\frac{\qnum{n-t}}{\qnum{k-t}}L(t)+(-1)^{t+1}q^{\binom{r}{2}-rk+\binom{t+1}{2}}\frac{\qbin{r}{t+1}\qbin{n-r+1}{t+1}}{\qbin{k}{t+1}}.
		\end{align}
		The proposition is now proven by induction on $t$. The required identity holds for $t=0$ since $\qbin{n}{0}=1$. Assume that it holds for a given $t>0$. By using \eqref{eq:expressionft1}, we obtain
		\begin{align*}
			L(t+1)
			&=\frac{\qnum{n-t}}{\qnum{k-t}}(-1)^t q^{\binom{r}{2}-kr+\binom{t+1}{2}}\frac{\qbin{r-1}{t}\qbin{n-r}{t}}{\qbin{k}{t}}+(-1)^{t+1}q^{\binom{r}{2}-rk+\binom{t+1}{2}}\frac{\qbin{r}{t+1}\qbin{n-r+1}{t+1}}{\qbin{k}{t+1}}\\
			&=(-1)^{t+1}q^{\binom{r}{2}-kr+\binom{t+1}{2}}
			\frac{\qbin{r-1}{t+1}\qbin{n-r}{t+1}}{\qbin{k}{t+1}}\left(
			-\frac{\qnum{n-t}\qnum{t+1}}{\qnum{r-t-1}\qnum{n-r-t}}+\frac{\qnum{r}\qnum{n-r+1}}{\qnum{r-t-1}\qnum{n-r-t}}\right)\\
			&=(-1)^{t+1}q^{\binom{r}{2}-kr+\binom{t+2}{2}}
			\frac{\qbin{r-1}{t+1}\qbin{n-r}{t+1}}{\qbin{k}{t+1}}.
		\end{align*} 
		This completes the proof.
	\end{proof}
\end{proposition}
\begin{proposition}\label{prop:threesums2}
	Let $n,k,r,t$ be integers with $r\leq n+1$. Then we have
	\begin{multline*}
		\sum_{a=0}^t (-1)^a c_a\frac{\qbin{r-1}{a}\qbin{n-r}{a}}{\qbin{k}{a}}\\
		=\sum_{i=0}^t \sum_{j=0}^{t-i} \sum_{s=0}^i 
		(-1)^{i+j+s}q^{-\binom{r}{2}+kr-(k-i)^2+\binom{j}{2}+\binom{s+r-i}{2}+(k-s-r)(k-s)}
		\frac{\qbin{n-2k+i}{i}\qbin{k-i}{j}\qbin{r}{i-s}\qbin{k-r}{s}^2}{\qbin{k}{i}\qbin{n-2k+s}{s}}.
	\end{multline*}
	where
	\[
	c_a=\begin{cases}
		q^{\binom{t+1}{2}}&\text{for $a=t$,}\\
		q^{\binom{a+1}{2}+n-a}\frac{\qnum{k-n}}{\qnum{k-a}}&\text{for $0\leq a<t$.}
	\end{cases}
	\]
	\begin{proof}
		Proposition~\ref{prop:threesums1} gives
		\begin{multline*}
			\sum_{a=0}^t (-1)^a c_a\frac{\qbin{r-1}{a}\qbin{n-r}{a}}{\qbin{k}{a}}\\
			=\sum_{a=0}^t\sum_{i=0}^a \sum_{j=0}^{a-i} \sum_{s=0}^i c_a (-1)^{i+j+s}q^{-\binom{r}{2}+kr-\binom{a+1}{2}-(k-i)^2+\binom{j}{2}+\binom{s+r-i}{2}+(k-s-r)(k-s)}\\
			\times
			\frac{\qbin{n-2k+i}{i}\qbin{k-i}{j}\qbin{n-i-j}{a-i-j}\qbin{r}{i-s}\qbin{k-r}{s}^2}{\qbin{k}{i}\qbin{k-i-j}{a-i-j}\qbin{n-2k+s}{s}}.
		\end{multline*}
		This becomes
		\begin{multline*}
			\sum_{a=0}^t (-1)^a c_a\frac{\qbin{r-1}{a}\qbin{n-r}{a}}{\qbin{k}{a}}
			=\sum_{i=0}^t \sum_{j=0}^{t-i} \sum_{s=0}^i \Bigg(\sum_{a=i+j}^t c_a q^{-\binom{a+1}{2}} \frac{\qbin{n-i-j}{a-i-j}}{\qbin{k-i-j}{a-i-j}} \Bigg)\\
			\times (-1)^{i+j+s}q^{-\binom{r}{2}+kr-(k-i)^2+\binom{j}{2}+\binom{s+r-i}{2}+(k-s-r)(k-s)}
			\frac{\qbin{n-2k+i}{i}\qbin{k-i}{j}\qbin{r}{i-s}\qbin{k-r}{s}^2}{\qbin{k}{i}\qbin{n-2k+s}{s}}.
		\end{multline*}
		We now prove by induction that, for all $m=0,1,\dots,t$, we have
		\[
		\sum_{a=m}^t c_a q^{-\binom{a+1}{2}} \frac{\qbin{n-m}{a-m}}{\qbin{k-m}{a-m}}=1.
		\]
		Observe that this equality holds for $m=t$. Assume that equality holds for an $m\in\{1,\dots,t\}$. We then obtain
		\begin{align*}
			\sum_{a=m-1}^t c_a q^{-\binom{a+1}{2}} \frac{\qbin{n-m+1}{a-m+1}}{\qbin{k-m+1}{a-m+1}}
			&=c_{m-1}q^{-\binom{m}{2}}+\sum_{a=m}^t c_a q^{-\binom{a+1}{2}} \frac{\qnum{n-m+1}}{\qnum{k-m+1}}\frac{\qbin{n-m}{a-m}}{\qbin{k-m}{a-m}}\\
			&=q^{n-m+1}\frac{\qnum{k-n}}{\qnum{k-m+1}}+\frac{\qnum{n-m+1}}{\qnum{k-m+1}}=1,
		\end{align*}
		which proves the required identity.
	\end{proof}
\end{proposition}
We can now derive the eigenvalues of the matrix $UU^{T}$.
\begin{theorem}\label{thm_eigenvalue.simplified}
	The eigenvalues of $UU^{T}$ are given by
	\begin{align*}
		\mu_{0}&=\kappa\left(1-\frac{\qnum{k-n}}{\qnum{k}} \sum_{i=0}^{t-1} q^{n-i}\frac{\qbin{n}{i}}{\qbin{k-1}{i}}\right)\\
		\mu_{r}&=\kappa\left(1+(-1)^r q^{\binom{r}{2}-kr+k}\frac{1}{\qbin{n-k-1}{r-1}} \sum_{i=0}^{t-1} (-1)^i q^{\binom{i}{2}}\qbin{k-i-1}{r-i-1}\qbin{n-r}{i}\right)
	\end{align*}
	for $r=1,2,\dots,k$.
	\begin{proof}
		The eigenvalues $\mu_{r}$ for $0\leq r\leq k$ are given by
		\begin{align}\label{eq:mur}
			\mu_{r}=\kappa+\sum_{i=0}^t \kappa_i \nu_{r}^{(k-i)}.
		\end{align}
		By using from Proposition \ref{prop_kappa} that
		\begin{align}\label{eq:kappa}
			\kappa = N\frac{\qbin{n}{t}}{\qbin{n}{k}\qbin{k}{t}}=\frac{N}{\qbin{n-t}{k-t}}
		\end{align}
		together with Proposition \ref{prop_lv.wang.binomial} and \eqref{eq:uppernegation}, the value of $\kappa_{i}$ from Proposition \ref{prop_kappa.i} can be expressed as
		\begin{align*}
			\kappa_{i}&=\frac{\kappa }{\qbin{n-k}{k-i}q^{(k-i)^2}}
			\Bigg(\frac{1}{\qbin{n-t}{k-t}}
			\sum_{j=0}^{t-i}(-1)^j q^{\binom{j}{2}} \qbin{k-i}{j}\qbin{n-i-j}{k-i-j}
			-\sum_{j=0}^{t-i}(-1)^j q^{\binom{j}{2}} \qbin{k-i}{j}\Bigg).
		\end{align*}
		Since it holds that
		\[
		\qbin{n-k}{k-i}=\frac{\qbin{k}{i}\qbin{n-k}{k}}{\qbin{n-2k+i}{i}}\quad\text{and}\quad \frac{\qbin{n-i-j}{k-i-j}}{\qbin{n-t}{k-t}}=\frac{\qbin{n-i-j}{t-i-j}}{\qbin{k-i-j}{t-i-j}},
		\]
		we obtain
		\begin{align}\label{eq:kappai}
			\kappa_{i}&=\frac{\kappa \qbin{n-2k+i}{i}}{\qbin{k}{i}\qbin{n-k}{k}q^{(k-i)^2}}
			\Bigg(\sum_{j=0}^{t-i}(-1)^j q^{\binom{j}{2}} \frac{\qbin{k-i}{j}\qbin{n-i-j}{t-i-j}}{\qbin{k-i-j}{t-i-j}}
			-\sum_{j=0}^{t-i}(-1)^j q^{\binom{j}{2}} \qbin{k-i}{j}\Bigg).
		\end{align}
		By \eqref{eq_eigenvalue.grassmann.scheme.delsarte}, we have
		\begin{align*}
			\nu_{r}^{(k-i)}=\sum_{j=0}^{k-i} (-1)^j \qbin{r}{j}\qbin{k-r}{i+j-r}\qbin{n-k-r}{k-i-j} q^{\binom{j}{2}+(k-i-j)(k-i-j+r)}.
		\end{align*}
		Set $s=i+j-r$ to obtain
		\begin{align*}
			\nu_{r}^{(k-i)}=(-1)^r\sum_{s=0}^{i} (-1)^{s+i} \qbin{r}{i-s}\qbin{k-r}{s}\qbin{n-k-r}{k-s-r} q^{\binom{s+r-i}{2}+(k-s-r)(k-s)}.
		\end{align*}
		Using
		\[
		\qbin{n-k-r}{k-s-r}=\frac{\qbin{n-k-r}{k-r}\qbin{k-r}{s}}{\qbin{n-2k+s}{s}}
		\]
		gives
		\begin{align}\label{eq:eigvalGrassmann}
			\nu_{r}^{(k-i)}=(-1)^r\qbin{n-k-r}{k-r}\sum_{s=0}^{i} (-1)^{s+i} \frac{\qbin{r}{i-s}\qbin{k-r}{s}^2}{\qbin{n-2k+s}{s}} q^{\binom{s+r-i}{2}+(k-s-r)(k-s)}.
		\end{align}
		Substituting \eqref{eq:kappa}, \eqref{eq:kappai}, and \eqref{eq:eigvalGrassmann} into \eqref{eq:mur} gives
		\begin{multline*}
			\mu_{r}=\kappa\Bigg(1+(-1)^r \frac{\qbin{n-k-r}{k-r}}{\qbin{n-k}{k}}\sum_{i=0}^t \sum_{s=0}^{i} (-1)^{s+i}q^{-(k-i)^2+\binom{s+r-i}{2}+(k-s-r)(k-s)} \frac{\qbin{n-2k+i}{i}\qbin{r}{i-s}\qbin{k-r}{s}^2}{\qbin{k}{i}\qbin{n-2k+s}{s}} \\
			\times
			\Bigg(\sum_{j=0}^{t-i}(-1)^j q^{\binom{j}{2}} \frac{\qbin{k-i}{j}\qbin{n-i-j}{t-i-j}}{\qbin{k-i-j}{t-i-j}}
			-\sum_{j=0}^{t-i}(-1)^j q^{\binom{j}{2}} \qbin{k-i}{j}\Bigg)\Bigg).
		\end{multline*}
		We write
		\begin{align}\label{eq:mur_short}
			\mu_{r}=\kappa\left(1+(-1)^r\frac{\qbin{n-k-r}{k-r}}{\qbin{n-k}{k}}(A-B)\right),
		\end{align}
		where
		\begin{align*}
			A=\sum_{i=0}^t \sum_{j=0}^{t-i} \sum_{s=0}^{i} (-1)^{i+j+s}q^{-(k-i)^2+\binom{j}{2}+\binom{s+r-i}{2}+(k-s-r)(k-s)} \frac{\qbin{n-2k+i}{i}\qbin{r}{i-s}\qbin{k-r}{s}^2\qbin{k-i}{j}\qbin{n-i-j}{t-i-j}}{\qbin{k}{i}\qbin{n-2k+s}{s}\qbin{k-i-j}{t-i-j}}
		\end{align*}
		and
		\begin{align*}
			B=\sum_{i=0}^t \sum_{j=0}^{t-i} \sum_{s=0}^{i} (-1)^{i+j+s}q^{-(k-i)^2+\binom{j}{2}+\binom{s+r-i}{2}+(k-s-r)(k-s)} \frac{\qbin{n-2k+i}{i}\qbin{r}{i-s}\qbin{k-r}{s}^2\qbin{k-i}{j}}{\qbin{k}{i}\qbin{n-2k+s}{s}}.
		\end{align*}
		From Proposition~\ref{prop:threesums1}, we find that
		\begin{align*}
			A=(-1)^t q^{\binom{r}{2}-kr+\binom{t+1}{2}}\frac{\qbin{r-1}{t}\qbin{n-r}{t}}{\qbin{k}{t}},
		\end{align*}
		and from Proposition~\ref{prop:threesums2} that
		\begin{align*}
			B=q^{\binom{r}{2}-kr} \sum_{a=0}^t (-1)^a c_a \frac{\qbin{r-1}{a}\qbin{n-r}{a}}{\qbin{k}{a}}.
		\end{align*}
		This implies
		\begin{align}\label{eq:AdiffB}
			A-B=-q^{\binom{r}{2}-kr} \sum_{a=0}^{t-1}(-1)^a q^{\binom{a+1}{2}+n-a}\frac{\qnum{k-n}}{\qnum{k-a}} \frac{\qbin{r-1}{a}\qbin{n-r}{a}}{\qbin{k}{a}}.
		\end{align}
		By substituting \eqref{eq:AdiffB} into \eqref{eq:mur_short}, we obtain
		\begin{align*}
			\mu_{r}=\kappa\left(1-(-1)^r\frac{\qbin{n-k-r}{k-r}}{\qbin{n-k}{k}}q^{\binom{r}{2}-kr} \sum_{a=0}^{t-1} (-1)^a q^{\binom{a+1}{2}+n-a}\frac{\qnum{k-n}}{\qnum{k-a}} \frac{\qbin{r-1}{a}\qbin{n-r}{a}}{\qbin{k}{a}}\right).
		\end{align*}
		Since $\qbin{-1}{a}=(-1)^a q^{-\binom{a+1}{2}}$ and $\qnum{k-a}\qbin{k}{a}=\qnum{k}\qbin{k-1}{a}$, we have
		\begin{align*}
			\mu_{0}
			=\kappa\left(1-\sum_{a=0}^{t-1} (-1)^a q^{\binom{a+1}{2}+n-a}\frac{\qnum{k-n}}{\qnum{k-a}} \frac{\qbin{-1}{a}\qbin{n}{a}}{\qbin{k}{a}}\right)
			=\kappa\left(1-\frac{\qnum{k-n}}{\qnum{k}} \sum_{a=0}^{t-1} q^{n-a}\frac{\qbin{n}{a}}{\qbin{k-1}{a}}\right),
		\end{align*}
		as required.
		For $r\geq 1$, we use
		\[
		\frac{\qbin{r-1}{a}}{\qnum{k-a}\qbin{k}{a}}
		=\frac{\qbin{r-1}{a}}{\qnum{k}\qbin{k-1}{a}}
		=\frac{\qbin{k-a-1}{r-a-1}}{\qnum{k}\qbin{k-1}{r-1}}
		\]
		to obtain 
		\begin{align*}
			\mu_{r}=\kappa\left(1-(-1)^r\frac{\qnum{k-n}\qbin{n-k-r}{k-r}}{\qnum{k}\qbin{k-1}{r-1}\qbin{n-k}{k}}q^{\binom{r}{2}-kr} \sum_{a=0}^{t-1} (-1)^a q^{\binom{a+1}{2}+n-a}\qbin{k-a-1}{r-a-1}\qbin{n-r}{a}\right).
		\end{align*}
		Since
		\[
		\frac{\qnum{k-n}\qbin{n-k-r}{k-r}}{\qnum{k}\qbin{k-1}{r-1}\qbin{n-k}{k}}=-q^{k-n}\frac{1}{\qbin{n-k-1}{r-1}},
		\]
		we have
		\begin{align*}
			\mu_{r}=\kappa\left(1+(-1)^r q^{\binom{r}{2}-kr+k}\frac{1}{\qbin{n-k-1}{r-1}} \sum_{a=0}^{t-1} (-1)^a q^{\binom{a}{2}}\qbin{k-a-1}{r-a-1}\qbin{n-r}{a}\right).
		\end{align*}
		This completes the proof.
	\end{proof}
\end{theorem}

\section{Proof of Theorem \ref{thm_main}}
\begin{proof}[Proof of Theorem \ref{thm_main}]
	Recall that the dimension $\dim_{\mathbb{Q}}\mathfrak{V}$ equals ${n\brack k}_{q}$ minus the multiplicity of zero as an eigenvalue of the matrix $UU^{T}$. It is straightforward to see that $\mu_{0}\neq 0$. We have
	\begin{align}
		\sum_{i=0}^{t-1}(-1)^{i}q^{\binom{i}{2}}{k-i-1\brack r-i-1}{n-r\brack i}
		&=q^{(r-1)(n-r)}{(k-1)-(n-r)\brack r-1}\notag\\
		&=(-1)^{r-1}q^{kr-k-\binom{r}{2}}{n-k-1\brack r-1},\label{eq:identity_used_for_mu_r}
	\end{align}
	where the first equality follows from Proposition \ref{prop_kurihara.triterms} by setting $(x,y,p,h)=(n-r,k-1,r-1,0)$ and the second equality follows from the upper negation formula \eqref{eq:uppernegation}. By Theorem \ref{thm_eigenvalue.simplified}, this implies $\mu_{r}=0$ for all $1\leq r\leq t$.\par 
	To show that $\mu_{r}$ is nonzero when $t+1\leq r\leq n$, we need to introduce some terminologies. For each integer $m$, we consider the $q$-adic expression of $m$, that is to write $m$ in the form of $\pm\sum_{i\geq 0}c_{i}q^{i}\in\mathbb{Z}[q]$, where $0\leq c_{i}<q$. Then we define \textit{the smallest exponent of $q$ in $m$} to be the natural number
	\begin{align*}
		v_{q}(m):=\begin{cases}
			\min\left\{i\geq 0\mid c_{i}\neq 0\right\} & \text{if $m\neq 0$},\\
			\infty & \text{if $m=0$},
		\end{cases}
	\end{align*}
	which is also known as the \textit{$q$-adic valuation of $m$}. We note that all $q$-binomial coefficients are of the form $\sum_{i\geq 0}c_{i}q^{i}$ with $c_{0}=1$ by definition. Having this notation, for $t+1\leq r\leq k$ we compute from \eqref{eq:identity_used_for_mu_r}
	\begin{align*}
		\sum_{i=0}^{t-1}(-1)^{i}q^{\binom{i}{2}}{k-i-1\brack r-i-1}{n-r\brack i}=(-1)^{r-1}q^{kr-k-\binom{r}{2}}{n-k-1\brack r-1}-\sum_{i=t}^{r-1}(-1)^{i}q^{\binom{i}{2}}{k-i-1\brack r-i-1}{n-r\brack i}.
	\end{align*}
	Note that by $r\leq k$ we have $\binom{r-1}{2}<kr-k-\binom{r}{2}$. Hence $v_{q}\left(\sum_{i=0}^{t-1}(-1)^{i}q^{\binom{i}{2}}{k-i-1\brack r-i-1}{n-r\brack i}\right)=\binom{t}{2}$, which by Theorem \ref{thm_eigenvalue.simplified} indicates that $\mu_{r}\neq 0$ for all $t+1\leq r\leq k$. Therefore the multiplicity of zero as an eigenvalue of $UU^{T}$ equals
	\begin{align*}
		\sum_{r=1}^{t}\left({n\brack r}-{n\brack r-1}\right)={n\brack t}-1,
	\end{align*}
	which completes the proof.
\end{proof}

\section*{Acknowledgement}
Charlene Wei{\ss} was supported by the German Academic Exchange Service (DAAD) for a research stay at the University of Amsterdam and thanks the Korteweg de Vries Institute for Mathematics for its hospitality; during this stay, she became involved in this project. Yue Zhou is partially supported by the Natural Science Foundation of Hunan Province (No.~2023RC1003) and the National Natural Science Foundation of China (No.~12371337).


\bibliographystyle{amsalpha}
\bibliography{references}

\end{document}